\newtheorem{notation}[theorem]{Notation}
\theoremstyle{definition}
\newtheorem{remark}[theorem]{Remark}
\newcommand{\Ann}{\mathop{\mathrm{Ann}}\nolimits}
\newcommand{\Gr}{\mathop{\mathrm{Gr}}\nolimits}
\newcommand{\LM}{\mathop{\mathsf{LM}}\nolimits}
\newcommand{\LT}{\mathop{\mathsf{LT}}\nolimits}
\newcommand{\N}{\mathop{\mathsf{N}}\nolimits}
\newcommand{\Qgr}{\mathop{\mathsf{Qgr}}\nolimits}
\newcommand{\qgr}{\mathop{\mathsf{qgr}}\nolimits}
\newcommand{\rk}{\mathop{\mathrm{rk}}\nolimits}
\newcommand{\Tors}{\mathop{\mathsf{Tors}}\nolimits}
\begin{document}

\title{Entropy of monomial algebras and derived categories}
\shorttitle{Entropy of monomial algebras and derived categories}

\volumeyear{????}
\paperID{??????}

\author{Lu Li\affil{1} and Dmitri Piontkovski\affil{2}}
\abbrevauthor{Lu Li and Dmitri Piontkovski}
\headabbrevauthor{Lu Li, and Dmitri Piontkovski}

\address{%
\affilnum{1}College of Mathematics and Statistics, Chongqing University,
Chongqing 401331, China
and
\affilnum{2}HSE University, Myasnitskaya ul. 20, Moscow 101000, Russia}

\correspdetails{dpiontkovski@hse.ru}

\received{1 Month 20XX}
\revised{11 Month 20XX}
\accepted{21 Month 20XX}

\communicated{A. Editor}

\begin{abstract}
Let $A$ be a finitely presented associative monomial algebra.
We study the category $\mathsf{qgr}(A)$ which is a quotient of the category of graded finitely presented $A$-modules by the finite-dimensional ones. 
As this  category plays a role of the category of coherent sheaves on the corresponding noncommutative variety, we consider its bounded derived category $\mathbf{D}^b(\mathsf{qgr}(A))$.
We calculate  the categorical entropy 
of the Serre twist functor on $\mathbf{D}^b(\mathsf{qgr}(A))$ 
and show that it is equal to the (natural) logarithm of the entropy 
of the algebra $A$ itself. Moreover, we relate these two kinds of entropy 
with the topological entropy of the Ufnarovski graph of $A$ and the entropy of the
path algebra of the graph. If $A$ is a path algebra of some quiver, the categorical entropy is equal to the logarithm 
of the spectral radius of the quiver's  adjacency matrix.
\end{abstract}

\maketitle

\section{Introduction}

Let $A$ be a finitely presented monomial algebra
\begin{equation}
\label{form:monomial algebra}
    A=\frac{k\Gamma}{(F)},
\end{equation}
that is, 
a
quotient of a path algebra 
$k\Gamma$ 
for a finite quiver $\Gamma = (\Gamma_0, \Gamma_1)$ 
by an ideal generated by 
a fixed finite set $F$ of words (paths).

The (algebraic) entropy of a graded algebra $A$ is the exponential measure of its growth~\cite{Newman2000TheAlgebras, Piontkovski2000HilbertAlgebras}, 
\begin{equation*}
\mathsf{h}_{alg} (A) = \varlimsup_{n\to \infty} \sqrt[n]{\dim A_n},
\end{equation*}
where the graded component $A_n$ is the 
span of all paths of length $n$ in $A$. The logarithm $\log \mathsf{h}_{alg}$ of this entropy in the case of monomial algebra equals 
to the entropy $\mathsf{h}(L)$ of the (regular) language~\cite{Kuich1970OnLanguages} consisting of the nonzero 
paths in $A$, or to the topological entropy of the corresponding subshift \cite{Adler1965TopologicalEntropy}.  Moreover, $\mathsf{h}(L)$  is equal to the topological entropy $\mathsf{h}(Q_A)$ of the Ufnarovski graph $Q_A$ of $A$. 
In this connections, the entropy of the algebra  measures also the complexity both of the language $L$ and of the graph $Q_A$. 

On the other hand, one can consider $A$ as a coordinate ring of a noncommutative variety. 
We denote by $\mathsf{Gr}(A)$ and $\mathsf{gr}(A)$ the category of $\mathbb{Z}$-graded right modules and its subcategory of the finitely presented right modules, respectively.
Denote by $\mathsf{Tors}( A)$ and $\mathsf{tors}( A)$ their full 
subcategories 
of all torsion modules (which are sums of finite-dimensional ones).
Then the quotient category $\mathsf{Qgr}(A):= \mathsf{Gr}(A)/ \mathsf{Tors} (A)$ plays the role of the of the category of quasicoherent sheaves on the noncommutative variety defined by $A$~\cite{Artin1994NoncommutativeSchemes}. Moreover, as $A$ is coherent (see~\cite{Piontkovski1996GrobnerAlgebras} and Section~\ref{sec:monom_r_coherent} below),   
$\mathsf{gr}(A)$ is an abelian Serre subcategory, so that one can define the 
category of {\em coherent } sheaves as $\mathsf{qgr}(A):= \mathsf{gr}(A)/ \mathsf{tors} (A)$ \cite{Polishchuk2005NoncommutativeAlgebras}. 
Consider 
the bounded derived category
$\mathbf{D}^b (\mathsf{qgr}(A))$ and the Serre twist functor on it. 

The notion of the entropy for (exact) endofunctors of triangulated categories (having a split generators)
has been introduced by Dimitrov, Haiden, Katzarkov, and Kontsevich~\cite{Dimitrov2014DynamicalCategories}. For some endomorphisms of projective varieties, the categorical entropy of the induced endofunctors of the derived category of coherent sheaves is connected with topological entropy
\cite{Kikuta2017OnCurves,Kikuta2019OnEntropy,Yoshioka2020CategoricalSurfaces}. 
The entropy and its value at zero is calculated for functors on the derived categories of algebraic varieties in~\cite{Dimitrov2014DynamicalCategories, Fan2018EntropyManifolds, Fan2018OnP-twists, Mattei2019CATEGORICALSURFACES, Kikuta2020ALINES, Ouchi2020ONTWISTS, Yoshioka2020CategoricalSurfaces}.  

Here we provide a calculation of the categorical entropy  for  the category $\mathbf{D}^b (\mathsf{qgr}(A))$ associated to the noncommutative variety defined by the coordinate ring $A$. In particular, we show that the entropy of the Serre twist functor is a constant and relate it to other type of entropy of topological and algebraic origin associated to the monomial algebra $A$.

For definitions and details related to the above versions of the entropy, see Section \ref{section: entropy}.
We will prove the following connection between them:
\begin{theorem}[Theorems \ref{theorem: The Holdaway--Smith homomorphism induces an equivalence of qgr} and~\ref{th: H_t = log h_alg}]
\label{th:main_intro}
The categorical entropy $\mathsf{h}_{t}(\mathbf{D}^b (\mathsf{qgr}(A)), \mathsf{S})$
of the Serre twist functor $\mathsf{S}$ 
 is a constant which is equal to  
\begin{equation*}
  \mathsf{h}_{t}(\mathbf{D}^b (\mathsf{qgr} A), \mathsf{S}) = 
  \log \mathsf{h}_{alg}(A).
\end{equation*}
\end{theorem}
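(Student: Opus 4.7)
The plan is to reduce to the case where $A$ is itself a path algebra via Theorem \ref{theorem: The Holdaway--Smith homomorphism induces an equivalence of qgr}. That theorem provides an equivalence of abelian categories $\qgr(A) \simeq \qgr(k Q_A)$, where $Q_A$ is the Ufnarovski graph of $A$, that is compatible with the grading shift and hence intertwines the Serre twists. Inducing on bounded derived categories, one obtains an equivalence of triangulated categories commuting with $\mathsf{S}$, and since categorical entropy is an invariant of the pair (category, endofunctor) under such equivalences, I may replace $A$ by $kQ_A$. On the algebraic side, the entropy of a monomial algebra equals the spectral radius of the adjacency matrix of its Ufnarovski graph, which in turn equals $\mathsf{h}_{alg}(kQ_A)$; so $\mathsf{h}_{alg}(A) = \mathsf{h}_{alg}(kQ_A)$ and the reduction is compatible with both sides of the theorem.

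With $B = kQ_A$ a path algebra, I would take as split generator of $\mathbf{D}^b(\qgr(B))$ the structure sheaf $\mathcal{O} = \pi(B)$, so that $\mathsf{S}^n \mathcal{O} = \pi(B(n))$. By the Dimitrov--Haiden--Katzarkov--Kontsevich definition,
\begin{equation*}
\mathsf{h}_t(\mathsf{S}) = \lim_{n\to\infty} \frac{1}{n} \log \delta_t(\mathcal{O}, \mathsf{S}^n \mathcal{O}).
\end{equation*}
Since $B$ is hereditary, $\qgr(B)$ has global dimension at most one, and $\mathrm{Hom}_{\qgr}(\mathcal{O}, \mathcal{O}(n))$ is identified with $B_n$ (the space of paths of length $n$) for $n$ large, while $\mathrm{Ext}^{\geq 1}$ eventually vanishes. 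The standard DHKK lower bound
\begin{equation*}
\mathsf{h}_t(\mathsf{S}) \geq \varlimsup_{n} \frac{1}{n} \log \sum_i e^{-it} \dim \mathrm{Hom}(\mathcal{O}, \mathsf{S}^n \mathcal{O}[i]),
\end{equation*}
evaluated at $t=0$, then gives $\mathsf{h}_0(\mathsf{S}) \geq \varlimsup_n \frac{1}{n} \log_2 \dim B_n = \log_2 \mathsf{h}_{alg}(B)$.

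For the matching upper bound, I would construct, for each $n$, an explicit filtration of $\mathsf{S}^n \mathcal{O}$ built from $\dim B_n$ shifts of $\mathcal{O}$: each vertex idempotent $e_v$ decomposes $B(n)$ into projectives $e_v B(n)$ whose graded pieces are explicitly described by the paths in $Q_A$, and splicing these together via distinguished triangles assembles $\mathsf{S}^n \mathcal{O}$ from $\dim B_n$ copies of $\mathcal{O}$. This gives $\delta_t(\mathcal{O}, \mathsf{S}^n \mathcal{O}) \leq \dim B_n$ (times an $n$-independent factor), matching the lower bound and closing the equality.

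The main obstacle I expect is verifying that $\pi(B)$, or some closely related finite sum of vertex projective images, is a genuine split generator of $\mathbf{D}^b(\qgr(B))$, and that the DHKK framework actually applies in this (non-smooth, non-proper in the classical sense) noncommutative projective setting. The Holdaway--Smith reduction is essential here precisely because it moves the problem into the hereditary regime, where coherence, existence of split generators, and the computation of $\mathrm{Hom}$ and $\mathrm{Ext}$ all become tractable via the adjacency matrix of $Q_A$.
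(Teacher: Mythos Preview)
Your reduction step via Theorem~\ref{theorem: The Holdaway--Smith homomorphism induces an equivalence of qgr} and the matching of algebraic entropies $\mathsf{h}_{alg}(A)=\mathsf{h}_{alg}(kQ_A)$ are exactly what the paper does. The divergence is in the computation for $B=kQ_A$. The paper's key structural input is Smith's theorem that $\qgr(kQ_A)$ is \emph{semi-simple}, not merely hereditary. From semi-simplicity the paper deduces (Lemma~\ref{lemma: rk=delta}) that for any object $X$ of the heart, $\delta_t(\mathcal{O},X)=\rk_{\mathcal{O}}(X)$ is an integer independent of $t$; the entropy then reduces to estimating $\rk_{\mathcal{O}}(\mathcal{O}(m))$, which is done by a direct dimension count on truncated modules (if $\mathcal{O}(m)$ is a summand of $\mathcal{O}^{\oplus s}$ then $b_{m+n}\le s\,b_n$ for $n\gg 0$, giving $s\ge \mathsf{h}_{alg}(B)^m$; and conversely $s\le b_m$ because $B_{\ge m}[m]$ decomposes into $b_m$ indecomposable projectives).

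Your route via the DHKK Hom--Ext lower bound and explicit filtrations is genuinely different but has gaps as written. First, that lower bound is stated for saturated (smooth proper) categories; you would need to verify this for $\mathbf{D}^b(\qgr(kQ_A))$, and the cleanest way to do so is precisely via semi-simplicity, which you do not invoke. Second, the identification $\mathrm{Hom}_{\qgr}(\mathcal{O},\mathcal{O}(n))\cong B_n$ is not correct in general: passing to $\qgr$ takes a direct limit over $\mathrm{Hom}_{\gr}(B_{\ge m},B(n))$, which can strictly enlarge $B_n$; only the exponential growth rate survives, and establishing that still requires the Smith description. Third, you only argue the lower bound at $t=0$; constancy in $t$ follows immediately once you know the higher $\mathrm{Ext}$ groups vanish, but that is again semi-simplicity, not ``global dimension at most one.'' In short, the missing idea is Smith's semi-simplicity of $\qgr(kQ_A)$: with it, the paper's rank argument is both shorter and yields the $t$-independence for free; without it, each of your three steps needs nontrivial repair.
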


In particular, we see that the algebraic entropy $\mathsf{h}_{alg}(A)$ is a derived invariant of the noncommutative variety associated to $A$.
It would be interesting to describe other noncommutative coherent algebras with this property. 
Note that the above entropy is related with the other kinds of entropy associated to $A$
as follows.
\begin{corollary}[Propositions~\ref{prop:former_Main theorem-1},~\ref{prop:Main theorem-3}]
\label{cor:intro other entropies}
For a monomial algebra $A$, we have the equalities
\begin{eqnarray*}
 \mathsf{h}_{t}(\mathbf{D}^b (\mathsf{qgr} A), \mathsf{S}) =
 \mathsf{h}_{top}(X_F) =  \mathsf{h}(L) = \log \mathsf{h}_{alg}(A) \ \ \\
  \ \  \ =  \mathsf{h}(Q_A) = 
 \log \mathsf{h}_{alg}(kQ_A) =
  \mathsf{h}_{t}(\mathbf{D}^b (\mathsf{qgr} (kQ_A)), \mathsf{S})=
 \log 
 \rho (Q_A), 
\end{eqnarray*}
where the entropies $\mathsf{h}, {\mathsf{h}}_{top}, {\mathsf{h}}_{alg}, \mathsf{h}_{t} $
relate the language of nonzero monomials of positive lenght in $A$, the subshift $X_F$ associated to $A$, the Ufnarovski graph $Q_A$
of $A$, the path algebra $kQ_A$, and the 
spectral radius $\rho (Q_A)$ of the adjacency matrix of the graph $Q_A$.
\end{corollary}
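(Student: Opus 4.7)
The plan is to use Theorem~\ref{th:main_intro} as the anchor and then link it to the remaining invariants by classical symbolic dynamics and the Ufnarovski graph construction. Specifically, Theorem~\ref{th:main_intro} already gives $\mathsf{h}_{t}(\mathbf{D}^{b}(\mathsf{qgr}\,A), \mathsf{S}) = \log_{2}\mathsf{h}_{alg}(A)$, and the same theorem applied to $kQ_{A}$---which is itself a finitely presented monomial algebra, defined by the finite quiver $Q_{A}$ with the empty set of relations---yields
\[
\mathsf{h}_{t}(\mathbf{D}^{b}(\mathsf{qgr}(kQ_{A})), \mathsf{S}) = \log_{2}\mathsf{h}_{alg}(kQ_{A}).
\]
So the corollary reduces to the purely combinatorial chain
\[
\log_{2}\mathsf{h}_{alg}(A) = \mathsf{h}(L) = \mathsf{h}_{top}(X_{F}) = \mathsf{h}(Q_{A}) = \log_{2}\rho(Q_{A}) = \log_{2}\mathsf{h}_{alg}(kQ_{A}).
\]

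For the first two equalities I would invoke the definition of algebraic entropy directly: in a monomial algebra the graded component $A_{n}$ has a basis given by the normal (i.e.\ nonzero) paths of length $n$, so $\dim A_{n} = |L_{n}|$ and therefore $\log_{2}\mathsf{h}_{alg}(A) = \varlimsup_{n}\tfrac{1}{n}\log_{2}|L_{n}| = \mathsf{h}(L)$ by the definition of language entropy. The equality $\mathsf{h}(L) = \mathsf{h}_{top}(X_{F})$ is the standard fact that a subshift of finite type has topological entropy equal to the growth rate of its admissible language, already recorded in the introduction and going back to Adler--Konheim--McAndrew.

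For the Ufnarovski part, I would use the key combinatorial property of the graph $Q_{A}$: its paths of length $n$ are in an (essentially) length-preserving bijection with normal words in $A$ of length $n+d-1$, where $d-1$ is the length of the vertices of $Q_{A}$. Hence the number of such paths and $\dim A_{n+d-1}$ differ by at most a bounded multiplicative factor, giving $\mathsf{h}(Q_{A}) = \mathsf{h}(L)$. Moreover, for any finite digraph the number of paths of length $n$ is $\mathbf{1}^{\top}M^{n}\mathbf{1}$ where $M$ is the adjacency matrix, which by Perron--Frobenius grows as $\rho(M)^{n}$ up to a polynomial factor; this yields $\mathsf{h}(Q_{A}) = \log_{2}\rho(Q_{A})$. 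The same count shows that $\dim(kQ_{A})_{n}$, being the total number of length-$n$ paths in $Q_{A}$, satisfies $\mathsf{h}_{alg}(kQ_{A}) = \rho(Q_{A})$, closing the chain.

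The main obstacle is not technical but conceptual: one must trust that the Ufnarovski graph exactly captures the exponential growth of the normal-word language (this is the only nontrivial combinatorial input, but it is classical) and verify that the hypotheses of Theorem~\ref{th:main_intro} apply verbatim to $kQ_{A}$, i.e.\ that a path algebra of a finite quiver is legitimately treated as a monomial algebra with an empty relation set. Once these two points are granted, the remaining steps are counting paths and invoking Perron--Frobenius, so the argument is essentially a bookkeeping chain of known equalities hung on Theorem~\ref{th:main_intro}.
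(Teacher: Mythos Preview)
Your proposal is correct and follows essentially the same route as the paper: the corollary is assembled from Theorem~\ref{th:main_intro} (applied both to $A$ and to the path algebra $kQ_A$), the counting identity $\dim A_n=|L_n|$ underlying Proposition~\ref{prop:former_Main theorem-1}, the Ufnarovski bijection underlying Proposition~\ref{prop:Main theorem-3}, and the standard Perron--Frobenius identification $\mathsf{h}(Q_A)=\log_2\rho(Q_A)$ recalled in the graph-entropy subsection. The only cosmetic difference is that the paper records the Ufnarovski bijection as an exact equality $\dim_k A_m=\dim_k(kQ_A)_{m-l}$ for $m>l$, rather than your ``bounded multiplicative factor'', but for the entropy computation this makes no difference.
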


If $A = k\Gamma$ is the path algebra, then the equalities are degenerated to 
\begin{equation*}
 \mathsf{h}_{t}(\mathbf{D}^b (\mathsf{qgr} (k\Gamma)), \mathsf{S}) = 
 \log \mathsf{h}_{alg}(k \Gamma) = 
 \log \rho (\Gamma),
\end{equation*}
since in this case $Q_A = \Gamma$.

Our approach is the following.
Holdaway and Smith~\cite{Holdaway2012AnQuivers} have established 
an equivalence of the category $\Qgr(A)$ with the analogous category 
$\Qgr(k Q_A)$ for the quiver algebra of the Ufnarovski graph $Q_A$ (see also the paper of Holdaway and Sisoda~\cite{Holdaway2014CategoryAlgebras} for the same result in a more general setting of non-connected algebras). 
This equivalence is induced by a map $f:A\to Q_A$ referred here as Holdaway--Smith map. We show that this map induces also an equaivalence 
of categories $\qgr A \simeq \qgr (k Q_A)$. Smith have described the category $\qgr$ for quiver algebras~\cite{Smith2012CategoryQuivers}. In particular, such category is semisimple. We use this fact to calculate the entropy 
$\mathsf{h}_{t}(\mathbf{D}^b (\mathsf{qgr}(A)), \mathsf{S}) = 
\mathsf{h}_{t}(\mathbf{D}^b (\mathsf{qgr}(k Q_A)), \mathsf{S})$. 


In Section~\ref{section: entropy} we recall various definitions of entropy used in this paper. In particular, we give the definitions of the entropy of a graded algebra, the topological entropy of subshifts, formal languages, and directed graphs.
Then we immediately deduce from definitions that 
the entropies 
of 
a finitely presented  
monomial algebra $A$, of the (regular) language $L$ consisting of the nonzero monomials of positive length in $A$, and of the associated subshift $X_F$
are related by the equalities 
$$\mathsf{h}_{top}(X_F) =  \mathsf{h}(L) = 
  \log \mathsf{h}_{alg}(A)
$$
  (see Proposition~\ref{prop:former_Main theorem-1}). Then we follow~\cite{Dimitrov2014DynamicalCategories} to 
  define   the categorical entropy for the triangulated categories. 
  
  To describe the relations of this last entropy with the previous ones, 
we should define and describe the category $\qgr A$ for a finitely presented monomial algebra $A$. First, we recall the fact the $A$ is coherent in Section~\ref{sec:monom_r_coherent}. A proof~\cite{Piontkovski1996GrobnerAlgebras} of this fact is given in a less general situation    than the one considered here (that is, for the connected monomial algebras in place of the quotients of quiver algebras). Here we give another self-contained proof.  We call an algebra with a fixed multiplicative ordering on the monomials in generators  {\em right Groebner finite} 
if it has finite Groebner basis of relations and each right-sided ideal in it has finite Groebner basis as well (Definition~\ref{def:Groebner_finite}). Then we use
\begin{proposition}[Corollary~\ref{cor:groebner_finite_are_coherent}]
\label{prop:intro:GF}
Each right Groebner finite algebra is right coherent.
\end{proposition}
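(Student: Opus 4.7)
My plan is to prove coherence by showing every finitely generated right ideal $I$ of $A$ is finitely presented, proceeding by induction on the number of generators $n$ of $I$. For $n = 1$, with $I = f_1 A$, the kernel of the map $A \twoheadrightarrow I$ sending $1 \mapsto f_1$ is the right annihilator $\{a \in A : f_1 a = 0\}$, which is itself a right ideal of $A$; by the right Groebner finite hypothesis this ideal has a finite Groebner basis, hence a finite generating set, and so $I$ is finitely presented.

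For the inductive step, write $I = (f_1, \dots, f_n)$ and form $\phi \colon A^n \twoheadrightarrow I$ with $e_i \mapsto f_i$. I would analyse $K := \ker \phi$ by projecting onto its last coordinate, $\pi \colon K \to A$ with $(a_1,\dots,a_n) \mapsto a_n$. The image of $\pi$ is the right colon ideal $J := \{a \in A : f_n a \in (f_1,\dots,f_{n-1})\}$, which is again a right ideal of $A$, so by hypothesis has a finite Groebner basis $a_1,\dots,a_m$. For each $a_k$, fix an expression $f_n a_k = -\sum_{i<n} f_i b_{ik}$ and form the syzygy $s_k := (b_{1k},\dots, b_{n-1,k}, a_k) \in K$. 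The kernel of $\pi$ coincides with the syzygy module of $(f_1,\dots,f_{n-1})$, finitely generated by the inductive hypothesis applied to this $(n{-}1)$-generated ideal. Combining the $s_k$'s with a finite generating set for $\ker \pi$ yields a finite generating set for $K$, whence $I$ is finitely presented.

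The essential point — and the only nontrivial use of the hypothesis — is that the colon ideal $J$ is genuinely a right ideal of $A$, so that the Groebner finite assumption applies directly and delivers a finite generating set. The remaining ingredients, namely lifting generators of $J$ to syzygies and decomposing $K$ via the projection $\pi$, are purely formal. The argument is in the spirit of the classical proof that right Noetherian algebras are right coherent, with the Groebner finite hypothesis supplying exactly the finite generation of right ideals (including colon and annihilator ideals) that the dévissage needs; no actual manipulation of Groebner basis overlaps or S-polynomial syzygies is required beyond invoking the hypothesis.
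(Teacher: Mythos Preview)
Your argument applies the Groebner-finite hypothesis to the annihilator $\Ann(f_1)$ and to the colon ideal $J = \{a : f_n a \in (f_1,\dots,f_{n-1})\}$. These are indeed right ideals, but they are not known a priori to be \emph{finitely generated}. If one reads Definition~\ref{def:Groebner_finite} literally as ``every right ideal has a finite Groebner basis'', then, since a Groebner basis generates, $A$ would be right Noetherian and your d\'evissage goes through---but then so does the one-line classical argument you yourself invoke, and the Groebner hypothesis is doing no work. That literal reading cannot be the intended one: the paper asserts (via Lemma~\ref{lem:finite_GB}) that every finitely presented monomial algebra is right Groebner finite, yet $k\langle x,y\rangle$ is not right Noetherian. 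The definition must be read as ``every \emph{finitely generated} right ideal admits a finite Groebner basis'', which is exactly what Lemma~\ref{lem:finite_GB} proves. Under that reading your proof has a genuine gap: you cannot invoke the hypothesis on $\Ann(f_1)$ or on the colon ideal $J$ without first knowing they are finitely generated, and that is precisely the kind of thing you are trying to establish.

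The paper's route (Proposition~\ref{prop:Groebner_f_p_ideal}) avoids this circularity by using the Groebner structure substantively rather than as a mere certificate of finite generation. Given a finite Groebner basis $G=\{g_1,\dots,g_n\}$ of a right ideal $J$, it builds an explicit finite set of syzygies $S(i,m)$, one for each $g_i$ and each monomial generator $m$ of $\Ann_{\hat A}\LM(g_i)$ in the associated monomial algebra $\hat A$ (these monomial annihilators are generated by proper left divisors of the leading terms of the relations, a finite set), and then shows by a leading-term descent that the $S(i,m)$ generate the full syzygy module $\ker(A^n\to J)$. No auxiliary right ideal of $A$ needs to be shown finitely generated along the way.
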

As each finitely presented 
monomial algebra is right Groebner finite (Lemma~\ref{lem:finite_GB}), we deduce that 
such an algebra is coherent (Corollary~\ref{corollary: coherent algebras}). 
Note that $A$ is coherent in the general non-graded sense.

Since $A$ is coherent, the category $\qgr A$ exists. 
Moreover, we have 
\begin{proposition}[Proposition~\ref{prop:qgr=fp Qgr}]
\label{prop:intro:qgr = fp Qgr}
Let $R$ be a 
finitely generated 
right graded coherent algebra.  Then there is a natural 
equivalence of categories $\mathsf{qgr} R \equiv \mathsf{fp} (\mathsf{Qgr} R)$,
where $ \mathsf{fp} $ denotes the subcategory of finitely presented objects.
\end{proposition}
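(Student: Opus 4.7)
The plan is to construct a comparison functor $\bar\pi : \qgr R \to \fp(\Qgr R)$ from the universal property of Serre quotients, then verify it is fully faithful and essentially surjective by reducing everything to cofinality of finitely generated submodules inside $\Gr R$. The exact localization $\pi : \Gr R \to \Qgr R$ annihilates $\tors R \subseteq \gr R$, so by the universal property of the Serre quotient it descends to an exact functor $\bar\pi : \qgr R \to \Qgr R$. To check its image lies in $\fp(\Qgr R)$, I would invoke the adjunction $\pi \dashv \omega$ with the section functor $\omega : \Qgr R \to \Gr R$: because $\Tors R$ is generated by finitely presented objects (the finite-dimensional simples of $\gr R$, available thanks to coherence), it is closed under filtered colimits and $\omega$ preserves them, so for $M \in \gr R$ the identification $\mathrm{Hom}_{\Qgr R}(\bar\pi M, -) \cong \mathrm{Hom}_{\Gr R}(M, \omega(-))$ preserves filtered colimits, using that $M$ is finitely presented in $\Gr R$.

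For full faithfulness, I would compare the Serre-quotient formula
\[
\mathrm{Hom}_{\Qgr R}(\pi M, \pi N) \;=\; \varinjlim_{M',\,N'} \mathrm{Hom}_{\Gr R}(M', N/N'),
\]
indexed over submodules $M' \subseteq M$ with torsion cokernel and torsion submodules $N' \subseteq N$, against its counterpart for $\qgr R$ in which $M', N'$ are restricted to be finitely generated. Cofinality of the finitely generated subfamily reduces to two elementary observations: torsion modules are directed unions of their finite-dimensional (hence automatically finitely generated) submodules, and when $M \in \gr R$ and $M/M'$ is finite-dimensional, $M'$ itself is finitely generated.

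For essential surjectivity, given $X \in \fp(\Qgr R)$ I would use that $R$ is a finitely generated algebra so that the shifts $\pi R(-n)$ form a family of finitely presented generators of $\Qgr R$. Hence $X$ admits a surjection $\pi F_0 \twoheadrightarrow X$ with $F_0 \in \gr R$ a finite direct sum of shifts of $R$, and its kernel $K \subseteq \pi F_0$ is finitely generated in $\Qgr R$ since $X$ is finitely presented. Lifting $K$ across $\pi$ yields a finitely generated submodule $F_0' \subseteq F_0$ with $\pi F_0' = K$; choosing a surjection $F_1 \twoheadrightarrow F_0'$ from a finite free and setting $N := \mathrm{coker}(F_1 \to F_0)$ then produces $N \in \gr R$ with $\bar\pi N \cong X$. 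The main obstacle I expect is this lifting step --- extracting a finitely generated submodule $F_0' \subseteq F_0$ from the abstract subobject $K$ of $\pi F_0$ --- which combines the Serre calculus of fractions with right graded coherence of $R$ (Proposition~\ref{prop:intro:GF}); coherence is precisely what guarantees that $\gr R$ is an abelian Serre subcategory of $\Gr R$ and that the bookkeeping of finitely generated submodules of objects in $\gr R$ stays inside the finitely presented world at every step.
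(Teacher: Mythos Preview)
Your argument is correct but takes a different route from the paper. The paper does not construct the comparison functor by hand; it quotes a general theorem of Krause on locally coherent Grothendieck categories (Theorem~\ref{theorem: coherent-equivalence}): if $\mathcal{A}$ is locally coherent and $\mathcal{C}\subseteq\mathcal{A}$ is a localizing Serre subcategory whose torsion radical $t:\mathcal{A}\to\mathcal{C}$ commutes with direct limits, then the canonical functor $\fp(\mathcal{A})/\fp(\mathcal{C})\to\fp(\mathcal{A}/\mathcal{C})$ is an equivalence. Taking $\mathcal{A}=\Gr R$ and $\mathcal{C}=\Tors R$ gives the statement immediately, so the only work the paper does is a short elementwise verification that $M\mapsto\Tors M$ commutes with direct limits, using that each $R_{\ge n}$ is finitely generated. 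Your proof instead unwinds Krause's theorem in this special case: the cofinality argument for full faithfulness and the lifting argument for essential surjectivity are precisely the ingredients of that proof. This buys self-containment at the cost of length. One point worth tightening: your assertion that $\omega$ preserves filtered colimits because $\Tors R$ is generated by finitely presented objects is itself a lemma (the characterization of finite-type localizations), not an immediate consequence; the paper sidesteps it by checking commutation of $t$ with direct limits explicitly. Both arguments ultimately rest on the same fact---that the torsion localization is of finite type---so the difference is one of packaging rather than substance.
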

This proposition looks standard, however, we have not found an explicit proof of it in the literature. 
We apply the theory of locally coherent Grothendieck categories~\cite{Krause1997TheCategory} 
to prove it in Section~\ref{App:locally_coherent_categories}.

To continue to study the category $\qgr A$,
we recall in the next Section~\ref{sec:Ufnarovski_n_Holdaway--Smith} the definitions of the Ufnarovski graph $Q_A$ and the Holdaway-Smith map of algebras $f: A\to k Q_A$. 
Then we show in Theorem~\ref{theorem: The Holdaway--Smith homomorphism induces an equivalence of qgr} that this homomorphism induces an equivalence 
 $$
 \mathsf{qgr} (A) \equiv \mathsf{qgr} (kQ_A).
 $$
In the next Section~\ref{sec:main_theorem}, we use some properties 
of the category $\qgr (kQ_A)$ established by Smith~\cite{Smith2012CategoryQuivers} (mainly, the semi-simplicity) to establish Theorem~\ref{th:main_intro}.
The key lemma is the following. For the definition of complexity used in it, we refer the reader to~\cite{Dimitrov2014DynamicalCategories} or to Subsection~\ref{subs:category_ent_def}.
\begin{lemma}[Lemma~\ref{lemma: rk=delta}]
\label{lemma: intro: rk=delta}
Let  $\mathcal{O}$ be a generator and  $X$ be an object of a semi-simple abelian category $\mathsf{C}$. 
Then the complexity $\delta_t(\mathcal{O}, X) $ of $X$ relative to $\mathcal{O}$ in the category $\mathbf{D}^b(\mathsf{C})$
is a constant which is equal to $\rk_{\mathcal{O}}(X)$, that is, 
the minimal number $s$ such that $X$ is a direct summand of $\mathcal{O}^s$ in 
$\mathbf{D}^b(\mathsf{C})$.
\end{lemma}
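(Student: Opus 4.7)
The plan is to establish both inequalities $\delta_t(\mathcal{O}, X) \leq \rk_{\mathcal{O}}(X)$ and $\delta_t(\mathcal{O}, X) \geq \rk_{\mathcal{O}}(X)$ for every real $t$. The upper bound is immediate: setting $s := \rk_{\mathcal{O}}(X)$, one has $\mathcal{O}^s \cong X \oplus X'$ for some $X'$, so the trivial tower $0 \to \mathcal{O} \to \mathcal{O}^2 \to \cdots \to \mathcal{O}^s$ whose $s$ cones all equal $\mathcal{O}[0]$ realizes $X \oplus X'$ and contributes $s \cdot e^{0\cdot t} = s$ to the complexity.

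For the lower bound I would introduce, for each simple object $S$ of $\mathsf{C}$ and each $Y \in \mathbf{D}^b(\mathsf{C})$, the invariant $\nu_S^j(Y) := [H^j(Y):S]$, the multiplicity of $S$ in the $j$-th cohomology of $Y$. Semi-simplicity of $\mathsf{C}$ gives $\mathrm{Ext}^i(-,-) = 0$ for $i \geq 1$, so every $Y$ decomposes as $\bigoplus_j H^j(Y)[-j]$ and the cohomological long exact sequence attached to any triangle $A \to B \to C \to A[1]$ breaks into split short exact sequences
\[
0 \to \mathrm{im}(H^j(A) \to H^j(B)) \to H^j(B) \to \mathrm{im}(H^j(B) \to H^j(C)) \to 0.
\]
Since the outer terms are, respectively, a quotient of $H^j(A)$ and a subobject of $H^j(C)$, this yields the sub-additivity $\nu_S^j(B) \leq \nu_S^j(A) + \nu_S^j(C)$. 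Inducting along any tower $0 = A_0 \to A_1 \to \cdots \to A_k = X \oplus X'$ with cones $\mathcal{O}[n_i]$, and noting that $\nu_S^0(\mathcal{O}[n_i])$ equals $[\mathcal{O}:S]$ when $n_i = 0$ and vanishes otherwise, one obtains
\[
[X:S] \;\leq\; \nu_S^0(X \oplus X') \;\leq\; [\mathcal{O}:S] \cdot \bigl|\{i : n_i = 0\}\bigr|.
\]

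Since $\mathcal{O}$ is a generator of the semi-simple category $\mathsf{C}$, every simple $S$ appears as a summand of $\mathcal{O}$, i.e., $[\mathcal{O}:S] \geq 1$, so the previous inequality forces $|\{i : n_i = 0\}| \geq \lceil [X:S]/[\mathcal{O}:S] \rceil$ for each $S$. Taking the maximum over all simples and recognising that $\max_S \lceil [X:S]/[\mathcal{O}:S] \rceil$ is the smallest integer $s$ with $[X:S] \leq s\,[\mathcal{O}:S]$ for every $S$, which is precisely $\rk_{\mathcal{O}}(X)$, one gets $|\{i : n_i = 0\}| \geq \rk_{\mathcal{O}}(X)$. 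Hence $\sum_i e^{n_i t} \geq \sum_{i : n_i = 0} 1 \geq \rk_{\mathcal{O}}(X)$ uniformly in $t$, proving the lower bound and simultaneously that $\delta_t(\mathcal{O}, X)$ is constant in $t$. The only step that requires real attention is the sub-additivity of $\nu_S^j$ on triangles, which rests on the vanishing of higher Exts; beyond this, the argument is bookkeeping and no serious obstacle is anticipated.
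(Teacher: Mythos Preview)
Your proof is correct and follows essentially the same route as the paper. The paper packages the inductive cohomology bound as ``$H^0(E_p^\bullet)$ is a direct summand of $\mathcal{O}^{\oplus s}$'' (its Lemma~6.3), while you phrase the identical step via the multiplicity count $\nu_S^0$ and the formula $\rk_{\mathcal{O}}(X)=\max_S\lceil [X{:}S]/[\mathcal{O}{:}S]\rceil$; both amount to reading off the long exact sequence in the semi-simple setting and then bounding $\sum_i e^{n_i t}\ge |\{i:n_i=0\}|\ge \rk_{\mathcal{O}}(X)$.
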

Since the categorical entropy is defined in terms of complexity,
we use the lemma to deduce in Theorem~\ref{Main theorem-2} that 
$\mathsf{h}_{t}(\mathbf{D}^b (\mathsf{qgr}(kQ_A)), \mathsf S) = 
  \log  \mathsf{h}_{alg}(kQ_A)$.
  In the view of the above equivalence  $
 \mathsf{qgr} (A) \equiv \mathsf{qgr} (kQ_A),
 $
Theorem~\ref{th:main_intro} now follows. 
In Section~\ref{section: An example}, we consider a particular example. 



\section{Algebraic entropy, topological entropy and category-theoretical entropy}
\label{section: entropy}

\subsection{Algebraic entropy of a graded algebra}
\label{subsection: Algebric entropy}
Let $A = \bigoplus_{m\geq 0}A_m$ be a graded algebra over a field $k$. Assume that each graded component $A_m$ is finite-dimensional as a vector space over $k$, so that $a_m = \mathsf{dim}_k A_m < +\infty$. Newman, Schneider and Shalev \cite{Newman2000TheAlgebras} defined the \textsf{algebric entropy} of $A$ by
\begin{equation*}
    \mathsf{h}_{alg}(A):= \varlimsup_{m\rightarrow +\infty} \sqrt[m]{a_m}.
\end{equation*}
In other words, it is the exponent of growth of the algebra $A$~\cite{Piontkovski2000HilbertAlgebras}.

Note that if the algebra $A$ is generated in degrees 0 and 1 (like the algebras considered in this paper), then the sequence $\{a_m \}$ is sub-multiplicative:
$a_{p} a_{q} \ge a_{p+q}$. By Fekete's Lemma, in this case the above limit exists:
\begin{equation*}
    \mathsf{h}_{alg}(A)= \lim_{m\rightarrow \infty} \sqrt[m]{a_m} = \inf_{m\ge 0} \sqrt[m]{a_m}.
\end{equation*}
Note that the radius of convergence of the Hilbert series $H_A(z) = \sum_m a_m z^m$ is  $ r = 1/\mathsf{h}_{alg}(A)$.

\subsection{Topological entropy of formal languages and subshifts}

\label{subs:top_entrop_fromal_langs}

Topological entropy was first introduced in 1965 by Adler, Konheim and McAndrew  in \cite{Adler1965TopologicalEntropy}. 
For a compact space $X$, let $\mathcal U$ be an open cover of $X$. 
The \textsf{entropy} of $\mathcal U$ is
\begin{equation*}
 \mathsf  H(\mathcal U) = \log N(\mathcal U),
\end{equation*}
where $N(\mathcal U) = \mathrm{min}\{|\mathcal V| : \mathcal V \text{ is a finite subcover of } \mathcal U \}$. Here $\log$ denotes the logarithm to some fixed base, see Remark~\ref{rem:log_base} below. For $m \in \mathbb Z_{>0}$ and open covers $\mathcal U_1, \cdots, \mathcal U_m$  let
\begin{equation*}
\mathcal U_1 \vee \cdots \vee \mathcal U_m = \{\bigcap_{i=1}^m U_i : U_i \in \mathcal U_i\} .   
\end{equation*}
Let  $f: X \rightarrow X$ be a continuous self-map.
The \textsf{topological entropy} of $f$ with respect to $\mathcal U$ is
\begin{equation*}
   \mathsf  H_{top}(f,\mathcal U)= \lim_{n\rightarrow +\infty} \frac{ \mathsf H(\mathcal U \vee f^{-1}(\mathcal U ) \vee \cdots \vee f^{-n+1}(\mathcal U ) )}{n}.
\end{equation*}
The \textsf{topological entropy} of $f$ is
\begin{equation*}
   \mathsf h_{top}(f):= \sup \{ \mathsf H_{top}(f, \mathcal U) : \mathcal U \text{ is an open cover of } X  \}.
\end{equation*}

We will consider the entropy of a subshift, which is a special case of the topological entropy.

Let $G=\{x_1, x_2,\cdots, x_n\}$ be an alphabet, $G^{+}$ the set of words in $G$ with finite length and $G^* := G^{+} \cup \{\epsilon\}$, where $\epsilon$ denotes the empty word. 
We write the usual concatenation of words as $uv$ for $u, v \in G^{+}$, and $v \lhd u$ if $v$ is a subword of $u$. Under word concatenation, $G^{*}$ forms a free monoid generated by $G$ with identity given by the empty word $\epsilon$. 
All subsets of $G^{\mathbb N}$ for a finite alphabet $G$ will automatically adopt the subspace topology, where $G$ is a discrete space and $G^{\mathbb N}$ is endowed with the product topology.


Let $F$ be a finite subset of $G^+$ which we call a set of forbidden words. The shift of finite
type $X_F$ associated to $F$ is the space
\begin{equation*}
   X_F = \{ x \in G^{\mathbb N} | u \lhd x \Rightarrow u \not\in F \} . 
\end{equation*}
Words in $X_F$ are called \textsf{legal}.  We write $L_n $ for the set of legal words of length $n$.
The \textsf{topological entropy} of  $X_F$ is 
\begin{equation*}
  \mathsf{h}_{top}(X_F):=  \varlimsup_{n\rightarrow +\infty} \frac{1}{n} \log|L_n|.
\end{equation*}

Note that $X_F$ is a closed subspace of $G^{\mathbb N}$ and the 
the self-map  $\sigma: X_F \rightarrow X_F$ defined as 
\begin{equation*}
\sigma: (a_0,a_1,\cdots, a_k, \cdots)\mapsto (a_1,a_2,\cdots, a_k, \cdots)
\end{equation*}
is continuous. Then $\mathsf{h}_{top}(X_F) = \mathsf h_{top}(\sigma)$. We are grateful to one of the anonymous reviewers who put our attention to this remark.

The  formal language $L = \bigcup_n L_n$ is the set of all nonzero paths of positive length in the monomial algebra $A$. Its entropy is defined to be 
\begin{equation*}
\mathsf{h} (L) = \log \varlimsup_{n\rightarrow +\infty} \sqrt[n]{|L_n|},
\end{equation*}
see~\cite{Kuich1970OnLanguages}. 

\begin{proposition}
\label{prop:former_Main theorem-1}
Let $A$ be a graded monomial algebra of the form
\begin{equation*}
A = k\Gamma/(F),
\end{equation*}
where $\Gamma =(\Gamma_0, \Gamma_1)$ is a finite quiver with
$\Gamma_1 = \{x_1,x_2,\cdots, x_n\}$, 
and $(F)$ denotes the ideal generated by 
a finite set
$F=\{w_1,\cdots,w_s\}$ is 
 of words in the alphabet $\Gamma_1$, . 
We denote  by $X_F$ 
the shift of finite
type  associated to $F$, and by $L$
the formal language as above. Then
\begin{equation*}
\mathsf{h}_{top}(X_F) = \mathsf{h} (L) = 
  \log \mathsf{h}_{alg}(A).
\end{equation*}
\end{proposition}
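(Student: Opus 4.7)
The plan is to reduce all three entropies to the single counting sequence $a_n = \dim_k A_n$ and then match the three definitions by elementary limit arithmetic. The key observation, standard for monomial algebras, is that a $k$-basis of $A_n$ consists of those paths in $\Gamma$ of length $n$ that do not contain any relation $w_i \in F$ as a subpath. By the definition of the subshift $X_F$ (enlarging $F$, if necessary, by the two-letter words $xy$ with $t(x)\neq s(y)$ so that the notion of ``legal'' coincides with being a genuine path in $\Gamma$), this basis is in natural bijection with $L_n$. Hence $a_n = |L_n|$ for every $n$.

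Once this identification is in hand, the proposition is just unwinding the definitions together with the fact that $\log_2$ commutes with $\varlimsup$ on positive sequences. Specifically, I would compute
\begin{equation*}
\log_2 \mathsf{h}_{alg}(A)
= \log_2 \varlimsup_{n} \sqrt[n]{a_n}
= \varlimsup_n \tfrac{1}{n}\log_2 a_n
= \varlimsup_n \tfrac{1}{n}\log_2|L_n|,
\end{equation*}
which is precisely $\mathsf{h}_{top}(X_F)$ by definition, and which also equals $\log_2 \varlimsup_n \sqrt[n]{|L_n|} = \mathsf{h}(L)$ by the same manipulation applied in reverse.

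There is no genuine obstacle here: the whole content of the proposition lies in the monomial-basis description of $A_n$, which is immediate because $(F)$ is a monomial ideal and so has the obvious complement in $k\Gamma$ as a $k$-subspace. The only point requiring care is the convention used to translate between ``words in the alphabet $\Gamma_1$'' (as in the subshift $X_F$) and ``paths in the quiver $\Gamma$'' (as in the algebra $A$); one must either include the non-composable pairs of arrows in $F$, or equivalently define $L_n$ to count legal paths rather than arbitrary legal words in the alphabet. Either convention yields $|L_n|=a_n$, and the three entropies coincide.
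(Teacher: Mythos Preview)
Your proof is correct and follows essentially the same route as the paper: identify $\dim_k A_n$ with $|L_n|$ via the monomial basis of a monomial algebra, then unwind the three definitions using the fact that $\log_2$ (being continuous and increasing) commutes with $\varlimsup$. Your explicit remark about reconciling ``words in $\Gamma_1$'' with ``paths in $\Gamma$'' by adjoining the non-composable two-letter words to $F$ is a point the paper leaves implicit, so if anything your write-up is slightly more careful.
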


\begin{proof}
We have
\begin{equation*}
    \mathsf{h}_{alg}(A):= \varlimsup_{n\rightarrow +\infty}\sqrt[n]{\mathsf{dim}_k A_n} = \varlimsup_{n\rightarrow +\infty} \sqrt[n]{\mathsf{dim}_k |L_n|}.
\end{equation*}
Hence
\begin{equation*}
\mathsf{h}_{top}(X_F) =  \varlimsup_{n\rightarrow +\infty} \frac{1}{n} \log|L_n| = \varlimsup_{n\rightarrow +\infty} \log \sqrt[n]{\mathsf{dim}_k L_n} = \mathsf{h} (L)  = \log \mathsf{h}_{alg}(A).   \ \ \square
\end{equation*}
\end{proof}

\subsection{Graph entropy}

\label{subs:graph_entropy}

Let $G = (G_0, G_1)$ be a directed graph. 
Suppose that $G$ is finite, and let $A_G$ denotes the adjacency matrix of $G$. Then the topological entropy of the graph $\mathsf{h}(G)$ is defined to be the topological entropy $\mathsf{h}_{top}(X_F)$
of the subshift $X_{G_1}$ of the paths in $G$, that is, 
\begin{equation*}
\mathsf{h}(G) := \varlimsup_{n\rightarrow +\infty}\frac{\log a_n}{n},
\end{equation*}
where $a_n$
the number of paths of length $n$. It is well-known that 
\begin{equation*}
\mathsf{h}(G) = \log r(A_G),
\end{equation*}
where  $r(A_G)$ is the spectral radius of $A_G$.
Since the entropy of the subshift $X_{G_1}$ is equal to the algebraic entropy of the path algebra $kG$, we have the equality
\begin{equation*}
\mathsf{h}(G) =  \log \mathsf{h}_{alg}(kG).
\end{equation*}

Note that there are various definitions of entropy for infinite graphs (such as the topological entropy, loop entropy, block entropy, and other). 
There is also another version of the entropy defined in terms of the associated $C^*$-algebra. Still, in the case of finite graphs, all these  entropies give the same value $h(G) = \log r(A_G)$, see~\cite{Jeong2006TopologicalC-algebras} and references therein.

\subsection{Category-theoretical entropy}

\label{subs:category_ent_def}

For triangulated categories and thick subcategories we refer to  
\cite{Weibel2013AnAlgebra} and
\cite{Neeman2014TriangulatedCategories.AM-148}.

\begin{definition}
Let $\mathsf{C}$ be a triangulated category.  We say a full triangulated subcategory $\mathsf L$ is \textsf{thick} if it is closed under taking direct summands.
\end{definition}

\begin{definition}
An object $\mathcal{O}$ of $\mathsf C$ is called a  \textsf{generator} if the smallest thick subcategory of $\mathsf C$ containing $\mathcal{O}$ is equal to $\mathsf C$ itself . 
\end{definition}

\begin{definition}(\cite{Dimitrov2014DynamicalCategories}, Definition 2.1)
\label{def:complexity}
Let $\mathsf{C}$ be a triangulated category with a generator $\mathcal O$. Let $E$ be a object of a triangulated category $\mathsf T$. 
There is an object $E'$ and a tower of distinguished triangles
\begin{eqnarray}
\label{s}
\xymatrix@=.4cm{
  E_0 \ar[rr]^{} & &  E_1 \ar[rr]^{} \ar[ld]^{} & &E_2 \ar[r]^{} \ar[ld]^{}&\cdots \ar[r]^{}& E_{p-1}\ar[rr]^{} & &E_p\cong E\oplus E'  \ar[ld]^{}  \\
  &  \mathcal{O}[n_1] \ar@{-->}[lu]_{} & & \mathcal{O}[n_2]  \ar@{-->}[lu]_{}& & \cdots  & & \mathcal{O}[n_p] \ar@{-->}[lu]_{}
   }
    \end{eqnarray} 
with $E_0 = 0$, $p\geq 0$, and $n_i\in \mathbb{Z}$.
Let $t$ be a real number. To each tower of distinguished triangles of the form~(\ref{s})  
we associate the exponential
sum $\sum_{i=1}^p e^{n_it}$.
Let $S_t\subset \mathbb{R}$ be the set of all such sums for a given $t$. The \textsf{complexity} of $E$ with respect
to $\mathcal{O}$ is the function $\delta_t(\mathcal{O}, E):\mathbb{R}\rightarrow [0,+\infty]$
of $t$, given by $\delta_t(\mathcal{O}, E)= \mathrm{inf} \ S_t$.
\end{definition}

\begin{lemma}(Subadditivity)
\label{lemma: Subadditivity}
\begin{equation*}
\delta_t(\mathcal{O}, E_1 \oplus E_2) \leq \delta_t(\mathcal{O}, E_1) + \delta_t(\mathcal{O}, E_2).
\end{equation*}
\end{lemma}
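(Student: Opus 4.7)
The plan is to concatenate two near-optimal towers of the form~\eqref{s} for $E_1$ and $E_2$ into a single tower whose top object contains $E_1 \oplus E_2$ as a direct summand, and whose associated exponential sum is the sum of the two original ones.

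To set this up, I would fix $\varepsilon > 0$ and use the definition of complexity as an infimum to choose a tower
\begin{equation*}
0 = F_0 \to F_1 \to \cdots \to F_p = E_1 \oplus E_1'
\end{equation*}
with cones $\mathcal{O}[m_1], \ldots, \mathcal{O}[m_p]$ satisfying $\sum_i e^{m_i t} \leq \delta_t(\mathcal{O}, E_1) + \varepsilon$, together with a second tower
\begin{equation*}
0 = G_0 \to G_1 \to \cdots \to G_q = E_2 \oplus E_2'
\end{equation*}
with cones $\mathcal{O}[n_1], \ldots, \mathcal{O}[n_q]$ satisfying $\sum_j e^{n_j t} \leq \delta_t(\mathcal{O}, E_2) + \varepsilon$. (If either complexity is $+\infty$ the inequality is automatic, and the case $p=0$ or $q=0$ corresponds to $E_1 = 0$ or $E_2 = 0$ and is trivial.)

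The key construction is to translate the second tower by $F_p$. Define $\tilde{G}_j := F_p \oplus G_j$ for $0 \leq j \leq q$. Since the direct sum of distinguished triangles is distinguished in any triangulated category, taking the direct sum of each triangle $G_j \to G_{j+1} \to \mathcal{O}[n_{j+1}]$ with the trivial triangle $F_p \xrightarrow{\mathrm{id}} F_p \to 0$ produces a distinguished triangle $\tilde{G}_j \to \tilde{G}_{j+1} \to \mathcal{O}[n_{j+1}]$. Because $\tilde{G}_0 = F_p$, the two towers glue into
\begin{equation*}
0 = F_0 \to \cdots \to F_p = \tilde{G}_0 \to \tilde{G}_1 \to \cdots \to \tilde{G}_q = (E_1 \oplus E_2) \oplus (E_1' \oplus E_2'),
\end{equation*}
which is a valid tower of $p+q$ distinguished triangles in the sense of~\eqref{s}, with cones $\mathcal{O}[m_1], \ldots, \mathcal{O}[m_p], \mathcal{O}[n_1], \ldots, \mathcal{O}[n_q]$, and top object exhibiting $E_1 \oplus E_2$ as a direct summand (with complementary summand $E_1' \oplus E_2'$).

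The exponential sum attached to this combined tower is
\begin{equation*}
\sum_{i=1}^{p} e^{m_i t} + \sum_{j=1}^{q} e^{n_j t} \leq \delta_t(\mathcal{O}, E_1) + \delta_t(\mathcal{O}, E_2) + 2\varepsilon,
\end{equation*}
so the infimum defining $\delta_t(\mathcal{O}, E_1 \oplus E_2)$ is bounded by the right-hand side, and letting $\varepsilon \to 0$ gives the claim. The only point that needs real care is the verification that the direct sum of a distinguished triangle with an identity triangle is again distinguished and that the two halves genuinely assemble into a single tower; both are standard features of triangulated categories, so I do not anticipate any substantial obstacle.
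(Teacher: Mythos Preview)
Your argument is correct and is exactly the standard concatenation proof. Note, however, that the paper does not actually supply a proof of this lemma: it is stated without proof immediately after the definition of complexity, as a basic property inherited from~\cite{Dimitrov2014DynamicalCategories}. So there is nothing to compare against beyond observing that your proposal fills in the omitted details in the expected way.
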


\begin{definition}
\label{def:cat_entropy}
Let $F : \mathsf C \rightarrow \mathsf C$ be an exact endofunctor of a triangulated category $\mathsf C$ with generator $\mathcal O$. The \textsf{entropy} of $F$ is the function $\mathsf h_t(F) : R \rightarrow [-\infty,+\infty)$ of $t$ given by
\begin{equation*}
\mathsf{h}_t(\mathsf C, F)= \mathrm{lim} \frac{1}{n} \log \delta_t(\mathcal{O}, F^n\mathcal{O}).
\end{equation*}
\end{definition}

It is shown in
~\cite[Lemma~2.5]{Dimitrov2014DynamicalCategories}
that $\mathsf{h}_t(\mathsf C, F)$ is well-defined, i.e., the limit defining $\mathsf{h}_t(\mathsf C, F)$ exists and is independent of the choice of generator $\mathcal O$.

\begin{remark}
\label{rem:log_base}
In
the above definitions of topological entropy of formal languages and subshift in Subsection~\ref{subs:top_entrop_fromal_langs} and of graph entropy in Subsection~\ref{subs:graph_entropy}, the  $\log$ symbol denotes the logarithm to an arbitrary  fixed base $a>1$.
In contrast, in Definition~\ref{def:cat_entropy} due to~\cite{Dimitrov2014DynamicalCategories} the symbol $\log$ denotes the natural logarithm. We are grateful to an anonymous referee who have pointed out this fact. If one needs to use $\log_a$ in place of the natural logarithm here, one should replace the exponents $e^{n_it}$ by $a^{n_it}$ in Definition~\ref{def:complexity} of complexity. 
In the view of this remark, below we assume that all our logarithms are to the same base.
\end{remark}

\section{Finitely presented monomial algebras are coherent}
\label{sec:monom_r_coherent}

It is proved in \cite{Piontkovski1996GrobnerAlgebras}
that each connected finitely presented monomial algebra 
is coherent. 
Here we give a self-contained proof of this fact
in the more general setting of monomial quotients of path algebras.
In particular, we give a new proof of key Lemma~\ref{lem:finite_GB} which establishes that each finitely presented monomial algebra is right Groebner finite, that is, each finitely generated right ideal in it admits finite Groebner basis.

Let us firstly recall standard facts from noncommutative Groebner bases theory. Note that the noncommutative 
Groebner bases are also known as Groebner--Shirshov bases. We use a version of the theory suitable for quiver algebras, see \cite{Farkas1993SynergyAlgebras}, \cite{Green2000MultiplicativeBases}. Our terminology is closed to the one from~\cite{Ufnarovskij1995CombinatorialAlgebra}.

In this section, let $A = k\Gamma /I$ be a
quotient of a path algebra 
$k\Gamma$ 
for a finite quiver $\Gamma = (\Gamma_0, \Gamma_1)$. Initially, we do not assume that $A$ is monomial.
The paths of $\Gamma$ and the unit are called {\em monomials};
the monomials form a multiplicative submonoid of $k \Gamma$.
Let $\leq $ denote an arbitrary multiplicative well-order on the
monomials 
such that 
$1$ is the minimal element. 
An example of such order is the degree-lexicographical one 
(then $m_1 < m_2$ iff either $\deg m_1 < \deg m_2$ or $\deg m_1 = \deg m_2$ and $m_1$ is less then $m_2$ lexicographically). 
Then a monomial $m\in k\Gamma$ is called \textsf{normal} if its image $m+I$ in $A$ cannot be presented as a linear combination of strictly less monomials. Then each element of $f\in A$ can uniquely be presented as $f = \N(f) +I$, 
where the \textsf {normal form} $\N(f) \in k\Gamma$ is a linear combination of normal monomials $\sum_t \alpha_t m_t$,
where for nonzero $f\in A$ one can assume that $\alpha_t \in k^\times$ and $m_t$ are normal monomial.
Here the largest monomial $m_t$ is called the \textsf{leading monomial} of $f$, and the corresponding  summand $\alpha_t m_t$  is called the  \textsf{leading term} of $f$. They are denoted by $\LM(f)$ and $\LT(f)$, respectively.
The element $\alpha_t \in k$ here is called the  \textsf{leading coefficient} of $f$.

We identify each element $f\in A$ with its normal form $\N(f) \in k\Gamma$.
So, we extend the map $N: f\mapsto N(f)$ to a $k$-linear projector $\N:k\Gamma \to k\Gamma$.
The multiplication of (the normal forms of) elements of the quotient algebra $A$ is denoted by $*$, so that we put
\begin{equation*}
a*b = \N(ab)
\end{equation*}
for all $a,b\in k\Gamma$.

A subset $G\subset I$ of the two-sided ideal $I$ is called a \textsf{Groebner basis} of $I$, if for each $f\in I$ there exist two monomials $p,q\in k\Gamma$ such that $\LM (f) = p \LM(g) q$ for some $g\in G$. A Groebner basis is always a generating set of the ideal $I$. In particular, if $I$ is generated by a set $M = \{m_1, m_2, \dots \}$ of monomials, then $M$ is a Groebner basis of $I$.

Similarly, a subset $H$ of a two-sided (respectively, right-sided) ideal $J$ of the quotient algebra $A = k\Gamma/I$ is called a Groebner basis of $J$ if for each $f\in J$ there are normal monomials $p, q$ (resp., a single normal monomial $q$) such that $\LM (f) = p \LM(g) q$  (resp., $\LM (f) = \LM(g) q$). In the case of right ideal, a Groebner basis of the ideal $J$ is also a Groebner basis of it as a right submodule of $A$ in the sense of~\cite[Prop.~4.2]{Green2000MultiplicativeBases}.

A version of the next Lemma for connected monomial algebras is given in \cite[Th.~1]{Piontkovski1996GrobnerAlgebras}.
\begin{lemma}
\label{lem:finite_GB}
Suppose that the ideal $I$ of relations of the algebra $A = k\Gamma/I$ is generated by 
a finite set of monomials. Suppose that $J$ is a finitely generated right-sided ideal in $A$. Then $J$ admits a finite Groebner basis (with respect to some monomial ordering). 

More precisely, if the defining monomial relations of $A$ have degrees at most $l+1$ and the generators of $J$ have degrees at most $d$, then for each monomial ordering which refines the partial order by the degrees of monomials (for example, the   degree-lexicographical order) there exists a Groebner basis of $J$   such that all its elements have degrees at most $d+l$. The elements of this basis have the form $hm$, where $\deg h \le d$ and $m$ is a normal monomial.
\end{lemma}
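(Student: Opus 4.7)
My plan is to construct the Gr\"obner basis directly, exploiting the fact that the relations in $I$ are monomials, which makes the structure of $f_i * q$, for $q$ a normal monomial, transparent. First I would replace each generator $f_i$ by $\N(f_i)$, so that $u_i := \LM(f_i)$ is a normal monomial of degree $\le d$. The candidate Gr\"obner basis is
\begin{equation*}
    G := \{\,f_i * m \;:\; 1 \le i \le s,\; m \text{ is a normal monomial},\; \deg m \le l,\; f_i * m \ne 0\,\}.
\end{equation*}
Each element of $G$ has the advertised form $hm$ with $h = f_i$ (so $\deg h \le d$) and $m$ a normal monomial of degree $\le l$, hence $\deg(f_i * m) \le d+l$. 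Since the quiver $\Gamma$ is finite, the set of normal monomials of degree $\le l$ is finite, so $G$ is finite.

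The crux of the proof is a stability lemma: if $q$ is a normal monomial with $\deg q > l$ and $q = q' q''$ is its factorization as paths with $\deg q' = l$, then $\LM(f_i * q) = \LM(f_i * q') \cdot q''$. To prove this, expand $f_i = \sum_r \alpha_r u_i^{(r)}$ with the monomials $u_i^{(r)}$ of $f_i$ in strictly decreasing order. Because relations are monomials, $f_i * q = \sum_{r:\, u_i^{(r)} q \text{ is normal}} \alpha_r\, u_i^{(r)} q$, the non-normal summands vanishing in $A$. Hence $\LM(f_i * q) = u_i^{(r^*(q))} q$, where $r^*(q)$ is the smallest $r$ with $u_i^{(r)} q$ normal. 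The lemma reduces to showing $r^*(q) = r^*(q')$; the inequality $r^*(q) \ge r^*(q')$ is immediate, and the reverse follows by a subword argument: any relation $m' \in F$ witnessing non-normality of $u_i^{(r^*(q'))} q$ would, by the length bound $|m'| \le l+1$ together with $\deg q' = l$, start at a position strictly inside $q$, hence lie entirely inside $q = q' q''$, contradicting normality of $q$.

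With the stability lemma in hand, the Gr\"obner property follows. Writing any $f \in J$ as $\sum_j c_j\, f_{i_j} * q_j$ with $q_j$ normal, each summand's leading monomial is $\LM(f_{i_j} * q_j) = \LM(f_{i_j} * q_j') \cdot q_j''$, right-divisible by an element of $\LM(G)$. In the absence of cancellation among the top leading monomials, $\LM(f)$ equals this maximum and is directly of the required form $\LM(g) \cdot q$ with $g \in G$ and $q$ normal. Otherwise, I would induct on $\LM(f)$ in the well-ordered set of normal monomials: a cancellation between two summands whose $G$-expansions share the top leading monomial reflects a syzygy $\LM(g_1) q_1 = \LM(g_2) q_2$ between elements $g_1, g_2 \in G$, and in the path algebra such a syzygy forces one of $\LM(g_1), \LM(g_2)$ to be a right multiple of the other. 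This reduces the cancellation to a single right-divisibility, allowing me to rewrite $f$ with a strictly smaller maximum leading monomial and invoke the inductive hypothesis.

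The main obstacle will be the rigorous handling of the cancellation/syzygy case. The argument relies crucially on the fact that in a monomial-quotient algebra, a product of normal monomials is either normal or vanishes in $A$, with no partial reductions; this prevents the emergence of ``intermediate'' obstructions not already captured by the first-round construction of $G$, which is why no Buchberger-style iteration beyond the single step defining $G$ is necessary.
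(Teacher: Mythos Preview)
Your candidate set $G = \{f_i * m : \deg m \le l\}$ is in general \emph{not} a Gr\"obner basis of $J$. Take $A = k\langle x, y\rangle/(x^2)$ (so $l = 1$) with the degree-lexicographic order determined by $x > y$, and let $J$ be the right ideal generated by $f_1 = yx$ and $f_2 = yx + y^2$ (so $d = 2$). Computing $f_i * m$ over the normal monomials $m \in \{1, x, y\}$ gives $G = \{yx,\ yxy,\ yx+y^2,\ y^2x,\ yxy+y^3\}$ with $\LM(G) = \{yx,\ yxy,\ y^2x\}$. But $y^2 = f_2 - f_1 \in J$, and $y^2$ is not a right multiple of any element of $\LM(G)$. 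The failure is exactly in your cancellation step: when the top terms of a representation $f = \sum_j c_j g_j p_j$ cancel, the natural replacement for a colliding pair $g_1, g_2$ with $\LM(g_1) = \LM(g_2)v$ is the element $g_1 - c\, g_2 v$, a genuine linear combination of the $f_i * m$ which is not itself of that form. Your $G$ is not closed under this operation, so the rewriting cannot stay inside $G$-representations, and the induction (which must be on $\max_j \LM(g_j p_j)$, not on $\LM(f)$, which does not change) stalls --- in the example there is no representation of $y^2$ with maximum below $yx$. Your stability lemma is correct and corresponds to the paper's properties (b)--(c), but it only controls products $f_i * q$, not such differences.

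The paper avoids this by taking a much larger, infinite $G$: all elements of $J$ of the form $hm$ with $\deg h \le d$ and $m$ a normal monomial of degree $\le l$, where $h$ is \emph{not} restricted to the original generators $f_i$. The crucial extra ingredient is property~(d) there: this enlarged $G$ is closed under $g_1 + \alpha\, g_2 * m$ whenever $\LM(g_1) = \LM(g_2) m$, and it is precisely this closure that lets the minimal-presentation argument terminate. Finiteness is recovered only at the very end, by observing that $\LM(G)$ is a finite set of monomials (bounded degree) and passing to one representative per leading monomial.
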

\begin{proof} 
Let $M$ denotes the minimal set of the monomial generators of $I$, and let $F = \{f_1, \dots , f_s\}$ be the minimal generating set of $J$ such that $\deg f_i \le d$ for all $i$. 
Consider the set $G$ consisting of all elements of $J$ having the form $g = hm$, where $\deg h \le d$ and 
$m$ is a normal monomial (maybe, empty) such that $\deg h +\deg m \le d+l$. Recall that the elements of $A$ (in particular, the element $g$) are identified with their normal forms, so that the equality $g=hm$ for normal forms implies
that the elements $h$ and $m$ of $k Q$ are normal and $hm = h *m$. 

Then the set $G$ has the following properties.

(a) $G$ is a generating set of $J$, since $F\subset G$, 

(b) If $g= hm$ as above such that $\deg m \ge l$, then for each monomial $q$ we have either $g*q=0$
or $g*q = gq$.

(c) Moreover, for each $g= hm$ as above with no additional restrictions on $\deg m$ and each normal monomial $q$
we have either $g*q = 0$ or $g*q = g'q'$, where $g'\in G$ and $q'$ is some right divisor of $q$. 

Indeed, assume that $g*q \ne 0$. If $q=bc$
and $\deg m + \deg b \ge l$, then we have $g*q = (g*b) c = g'c$, where  $g' = g*b = h*(mb)$ belongs to $G$. Otherwise,
$g' = g*q \in G$.

(d) If $g_1, g_2 \in G$ and $\LM (g_1) = \LM (g_2) m$ for some monomial $m$, then the element $g = g_1 +\alpha g_2 * m$
belongs to $G$ for each $\alpha \in k.$

(e) $G$ is a Groebner basis of $J$.

Indeed, let $f$ be an arbitrary element of $J$. We should prove that $\LM (f)$ is left divisible by some $\LM(g)$ with $g\in G$. 
By (a) and (c), $f$ can be presented as a sum
\begin{equation}
\label{eq:pres_f}
f = \sum_{i=1}^N g_i p_i, 
\end{equation}
where $g_i \in G$, the elements $p_i$ are normal, and $g_i * p_i = g_i p_i$. 
Let the monomial $\max_i \LM(g_i p_i)$ is called the leading monomial of the presentation. Among all such representations for $f$, we fix the one with the least possible leading monomial.

Let $\LM(g_1 p_1) \ge \LM(g_2 p_2) \ge \dots \ge \LM(g_N p_N)$ are listed in decreasing order. If $\LM(f) = \LM(g_1 p_1) = \LM(g_1) \LM(p_1)$,
then $\LM(f)$ is left divisible by $\LM(g_1)$. Otherwise, there is $n$ such that 
$\LM(g_1 p_1) =\dots = \LM(g_n p_n) > \LM(g_{n+1} p_{n+1})\ge \dots$ 
and $\LT(g_1 p_1) +\dots +\LT(g_n p_n) =0$.
We can assume also that $
\LM(g_1) \ge \dots \ge \LM(g_n)$. 
For $i\in[2,n]$, we have in addition $\LM(g_1) \LM(p_1) = \LM(g_i) \LM(p_i)$, 
so that there is a normal monomial $m_i$ such that $\LM(g_1) = \LM(g_i) m_i$ and 
$\LM(p_i) = m_i \LM(p_1) $. For completeness, we put also $m_1=1$.

Let $\alpha_i$ denotes 
the leading coefficients of $p_i$. 

Consider the element 
$$
s = \sum_{i=1}^n g_i \LT(p_i) = \sum_{i=1}^n \alpha_i g_i m_i \LM (p_1) = g \LM (p_1),
$$
where the element $g = \sum_{i=1}^n \alpha_i g_i m_i$ belongs to $G$ by (d).
Since the sum of the leading terms in the above presentation of $s$ is zero, the leading monomial
$\LM (s) = \LM (g) \LM (p_1)$ is less then the leading monomial 
$\LM (g_1) \LM (p_1)$ of the presentation~(\ref{eq:pres_f}) for $f$. 

Then we get another presentation for $f$,
$$
f = \sum_{i=1}^n g_i (p_i-\LT (p_i)) + g \LM (p_1) +  \sum_{i=n+1}^N g_i p_i
$$
such that its leading monomial is less than the one of the presentation~(\ref{eq:pres_f}). A contradiction.

Now, note that the set of all possible leading monomials of the elements of $G$ is finite. Let $G'$ 
be any finite subset of $G$ such that for each $g\in G$, there is an element $g'\in G'$ such that $\LM g$
is left divisible by $\LM g'$. Then the set $G' \cup F$ is a finite Groebner basis of $J$ which satisfies the conditions of Lemma. 
\end{proof}

\begin{proposition}
\label{prop:Groebner_f_p_ideal}
Let $A = k\Gamma/I$ a quotient algebra of a path algebra $k\Gamma$, 
where the ideal $I$ in $k\Gamma$ has finite Groebner basis.
Assume that a right-sided  ideal $J$ in the algebra $A$ also has finite Groebner basis.
Then $J$ is finitely presented as a right $A$-module.
\end{proposition}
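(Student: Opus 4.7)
The plan is to prove that the kernel of a natural free presentation of $J$ is finitely generated as a right $A$-module, via a standard noncommutative Groebner basis argument for the first syzygy module. Let $G = \{g_1, \dots, g_r\}$ be the given finite Groebner basis of $J$ and let $H$ be the finite Groebner basis of $I$ in $k\Gamma$. I would form the right $A$-module surjection $\phi \colon A^r \twoheadrightarrow J$ sending $e_i$ to $g_i$; surjectivity is automatic since $G$ generates $J$, and the proposition reduces to proving that $K := \ker \phi$ is finitely generated as a right $A$-module.

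To control $K$, I would equip the free module $A^r$ with a term-over-position monomial order extending the order on $k\Gamma$: monomials of $A^r$ are symbols $e_i m$ with $m$ a normal monomial of $A$, and the leading position-monomial of any $z \in A^r$ is defined in the obvious way. The strategy is to exhibit a \emph{finite} set $S \subset K$ whose leading position-monomials dominate, in the right-$A$-module sense, the leading position-monomial of every $z \in K$; a Groebner-basis reduction argument patterned on the proof of Lemma~\ref{lem:finite_GB} will then show that $S$ actually generates $K$.

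The set $S$ is obtained from the overlap ambiguities attached to $G$ and $H$. For each pair $(g_i, g_j)$ and each minimal pair of normal paths $(u, v)$ realising a coincidence $\LM(g_i)\, u = \LM(g_j)\, v$ in $\Gamma$, the combination $e_i u - \lambda\, e_j v$ (with $\lambda$ the appropriate ratio of leading coefficients) is mapped by $\phi$ to an element of $J$ whose leading monomial is strictly smaller than $\LM(g_i)\, u$; reducing this image to zero modulo $G$ in the usual way then yields a syzygy in $K$. Similarly, for each $g_i$ and each minimal normal monomial $u$ such that $\LM(g_i)\, u$ is divisible by a leading monomial of some element of $H$, the element $e_i u$ lands in $I$ after reduction, producing another syzygy. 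Since $G$ and $H$ are finite and paths in the finite quiver $\Gamma$ form the free partial monoid on a finite alphabet, the combinatorics of overlaps guarantees that only finitely many such minimal $(u,v)$ and $u$ exist, and hence $S$ is finite.

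The main technical obstacle I anticipate is the verification that $S$ generates $K$ as a right $A$-module, which is the noncommutative analogue of Buchberger's criterion in the present module-over-quotient setting. One has to show that every $z \in K$ can be rewritten, by subtracting right-$A$-multiples of the elementary syzygies in $S$, so that its leading position-monomial strictly decreases; the well-ordering then forces termination at $z = 0$ modulo $S$. The delicate point is the interaction through the quotient $A = k\Gamma/I$ between the two kinds of ambiguities --- those internal to $G$ and those involving $H$ --- and confluence here is most cleanly handled by a diamond-lemma argument lifted to $k\Gamma$, completely analogous in spirit to the inductive reduction carried out in the proof of Lemma~\ref{lem:finite_GB}, but with the case analysis for simultaneous overlaps forming the principal bookkeeping burden of the argument.
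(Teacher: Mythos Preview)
Your proposal follows essentially the same route as the paper's proof: build a finite set of syzygies from overlap data, then run a leading-monomial descent on $K=\ker\phi$ to show they generate. The one substantive difference is that the paper dispenses entirely with your ``first type'' of syzygy (coincidences $\LM(g_i)\,u=\LM(g_j)\,v$). This is a pleasant feature of path algebras combined with minimality of $G$: if two paths $\LM(g_i)$ and $\LM(g_j)$ are both prefixes of a common path, then one is a prefix of the other, which a minimal Gr\"obner basis forbids. The paper encodes this by noting that the $k$-linear map $A^r\to k\Gamma$, $\tilde g_i\,m\mapsto \LM(g_i)\,m$ (on normal monomials), is injective, and uses it to transport the monomial order to $A^r$. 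Consequently every nonzero $\omega=\sum_t\tilde g_t b_t\in K$ has a \emph{unique} leading position-monomial $\tilde g_i\,\LM(b_i)$, and the vanishing $\pi(\omega)=0$ then forces $\LM(b_i)\in\Ann_{\hat A}\LM(g_i)$ --- so only your ``second type'' ever occurs. This collapses precisely the ``interaction between the two kinds of ambiguities'' that you flagged as the delicate point, and the remaining descent is the one-line subtraction $\omega\mapsto\omega-S(i,m)\,q$ you already described.
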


\begin{proof}
Let $S = \{s_1, \dots , s_m\}$ and $G =\{g_1, \dots, g_s\}$ be minimal Groebner bases for $I$ and $J$, respectively. 
Let $D$ be the set of all proper 
right
divisors of the leading terms of the elements of $S$.
Let $\hat A$ be the associated monomial algebra $\hat A = k \Gamma /(\LM(S))$. 
Then  the right annihilator $\Ann_{\hat A} w$ in $\hat A$ of a normal  monomial $w$ is generated by some elements of $D$.

Suppose that $m\in \Ann_{\hat A} \LM(g_i)$ for some $g_i \in G$. Then the leading monomial of the element $g_i* m$ of $J$
is less than $g_i m$. This leading monomial is left divisible by some $\LM (g_{i_1})$; say, this leading monomial is $\LM (g_{i_1}) m_1$. Next, the leading monomial of the element 
$$
  g_i* m - g_{i_1} * m_1
$$
is left divisible by some $\LM(g_{i_2})$, etc. After a finite number $N$ of steps, we get zero.
So, we obtain a presentation
\begin{equation}
    \label{eq:pres_4_gi}
g_i *m = \sum_{j=1}^N g_{i_j} *m_j,
\end{equation}
where each for each $j$ we have  $\LM(g_{i_j} m_j) < g_i m$.

Since $J$ is generated by $s$ elements, we have a short exact sequence 
$$
0\to \Omega \to A^s \stackrel{\pi}\longrightarrow  J \to 0
$$
for some submodule $\Omega \subset A^s$. We should prove that $\Omega$ is finitely generated.
Let $\tilde g_1, \dots , \tilde g_s$ be the free generators of the above free module $P = A^s$.
We claim that $\Omega$ is generated by the elements 
$$
S(i,m) = \tilde g_i m - \sum_{j=1}^N \tilde g_{i_j} m_j,
$$
where $g_i$ runs over $G$, $m$ runs over the monomial generators of $\Ann_{\hat A} \LM(g_i)$, and the sum arises from~(\ref{eq:pres_4_gi}).
Note that the above map $ \pi: A^s \to J$ sends $\tilde g_i$ to $g_i$, so that (\ref{eq:pres_4_gi}) ensures that each
$S(i,m)$ belongs to $\Omega$. 

To prove this claim,  let us extend the 
monomial order to the free module $P$ via the $k$-linear injection $P\to k\Gamma$ which sends  $\tilde g_i$ to 
 $\LM (g_i)$. It is indeed an injection since $G$ is a minimal Groebner basis, so that the monomials $\LM (g_i)$ are not left divisible by each other. Each element of $\Omega$ has the form
 $$
 \omega = \sum_t \tilde g_t b_t,
 $$
 where $\sum_t g_t * b_t = 0$. 
 Assume that $\omega $ does not belong to the submodule $\Omega_S$ generated by the elements $ S(i,m) $  
 and that 
 its leading monomial is the least one among all elements having this property.

The leading term of $\omega$ is equal to $\tilde g_i \LT(b_i)$ for some $t=i$. Let $l+1$ be the maximal length the monomials in $\LM(S)$.
Since $\pi (\omega) = 0$, we have  $\LT(b_i) \in \Ann_{\hat A} \LM(g_i)$, so that $\LT b_i = m q$ for some normal monomial $m$
of length at most $l$ and some normal $q$. Using~(\ref{eq:pres_4_gi}), we obtain another element 
$$
\omega' = \omega - S(i,m) q  =  \sum_{t\ne i} \tilde g_t b_t + \tilde g_i (b_i-\LT(b_i)) + \sum_{j=1}^N \tilde g_{i_j} (m_j *q), 
$$
which belongs to $\Omega \setminus \Omega_S$ and has less leading monomial than $\omega$. This contradiction shows that 
$\Omega= \Omega_S$.
\end{proof}

\begin{definition}
\label{def:Groebner_finite}
Fix an admissible monomial ordering on the paths of a quiver $\Gamma$.
We call an algebra $A = k\Gamma/I$ \textsf{right Groebner finite}
if the two-sided ideal $I$ of relations has finite Groebner 
basis, and each right-sided ideal in $A$ has finite Groebner basis as well.
\end{definition}
Lemma~\ref{lem:finite_GB} shows that each finitely presented monomial algebra is right Groebner finite. A more general class of Groebner finite algebras
has been considered in~\cite{Piontkovski2001Non-commutativeSemigroups}.

Similar notions of Groebner finite algebras (for two-sided ideals) and Groebner coherent rings (in the commutative settings) have been under consideration in~\cite{Leamer2006GrobnerAlgebras} and~\cite{Nagpal2020Grobner-coherentModules}. 

For a more restrictive property of universal coherence for a class of Groebner finite algebras, see~\cite[Prop.~4.10]{Piontkovski2005LinearRings}.

Proposition~\ref{prop:Groebner_f_p_ideal} implies

\begin{corollary}
\label{cor:groebner_finite_are_coherent}
Each right Groebner finite algebra is right coherent.  
\end{corollary}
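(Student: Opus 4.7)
The plan is to derive the corollary as an immediate consequence of Proposition~\ref{prop:Groebner_f_p_ideal} together with Definition~\ref{def:Groebner_finite}. Recall the standard (Bourbaki) characterization: a ring is right coherent precisely when every finitely generated right ideal is finitely presented as a right module. So the only thing I need to check for an arbitrary right Groebner finite algebra $A = k\Gamma/I$ is this finiteness-of-presentation property for each finitely generated right ideal.

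First I would fix an arbitrary finitely generated right-sided ideal $J \subseteq A$. By Definition~\ref{def:Groebner_finite}, the two-sided ideal $I$ of relations of $A$ admits a finite Groebner basis in $k\Gamma$, and the right ideal $J$ also admits a finite Groebner basis in $A$. Thus both hypotheses of Proposition~\ref{prop:Groebner_f_p_ideal} are in force, and applying that proposition yields that $J$ is finitely presented as a right $A$-module. Since $J$ was an arbitrary finitely generated right ideal, $A$ is right coherent.

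I do not anticipate any real obstacle here: the corollary is essentially a repackaging of Proposition~\ref{prop:Groebner_f_p_ideal} once the definition of a right Groebner finite algebra is unfolded, and the substantive content has already been carried by the Groebner-basis manipulations in Lemma~\ref{lem:finite_GB} and Proposition~\ref{prop:Groebner_f_p_ideal}. The only minor point worth flagging is the appeal to the equivalence between the definition of coherence in terms of right ideals and the more module-theoretic form (finitely generated submodules of finitely generated free modules being finitely presented); this follows from the standard fact that finite direct sums of coherent modules are coherent and that finitely generated submodules of coherent modules are coherent, applied to the right $A$-module $A$ itself.
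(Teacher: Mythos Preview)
Your proposal is correct and matches the paper's approach exactly: the paper simply states that Proposition~\ref{prop:Groebner_f_p_ideal} implies the corollary, and you have correctly unpacked this by invoking Definition~\ref{def:Groebner_finite} to verify the hypotheses of that proposition for an arbitrary finitely generated right ideal, together with the standard characterization of right coherence via finite presentation of finitely generated right ideals.
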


In the view of Lemma~\ref{lem:finite_GB}, we get

\begin{corollary}
\label{corollary: coherent algebras}
Suppose that the ideal $I$ of the algebra $ k\Gamma$ is generated by 
a finite set of monomials. Then the quotient algebra $A = k\Gamma/I$ is right and left coherent.
\end{corollary}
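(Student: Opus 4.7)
The plan is to reduce the corollary to the combination of the two results just established, namely Lemma~\ref{lem:finite_GB} and Corollary~\ref{cor:groebner_finite_are_coherent}. Fix any admissible multiplicative well-order on the paths of $\Gamma$. The finite set of monomial generators of $I$ is tautologically a (finite) Groebner basis of $I$, since each monomial in $I$ is left-right divisible by one of them. By Lemma~\ref{lem:finite_GB}, every finitely generated right-sided ideal of $A$ admits a finite Groebner basis as well. By Definition~\ref{def:Groebner_finite}, this means that $A$ is right Groebner finite. Applying Corollary~\ref{cor:groebner_finite_are_coherent}, we conclude that $A$ is right coherent.

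For the left coherence, I would invoke the left-right symmetry via the opposite quiver. Let $\Gamma^{\mathrm{op}}$ denote the quiver obtained from $\Gamma$ by reversing the direction of every arrow, and for each monomial (path) $w$ in $k\Gamma$ let $w^{\mathrm{op}}$ be the corresponding reversed path in $k\Gamma^{\mathrm{op}}$. There is a natural $k$-algebra isomorphism
\begin{equation*}
A^{\mathrm{op}} \;\cong\; k\Gamma^{\mathrm{op}}/I',
\end{equation*}
where $I'$ is the two-sided ideal generated by the finite set $\{w^{\mathrm{op}} : w \in F\}$ of monomials of $k\Gamma^{\mathrm{op}}$. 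Thus $A^{\mathrm{op}}$ is again a finitely presented monomial quotient of a path algebra, to which the right-coherence argument of the previous paragraph applies verbatim. Hence $A^{\mathrm{op}}$ is right coherent, which is precisely the statement that $A$ is left coherent.

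There is no real obstacle here: every nontrivial ingredient has already been packaged into Lemma~\ref{lem:finite_GB} and Corollary~\ref{cor:groebner_finite_are_coherent}, so the proof is a short bookkeeping argument. The only step requiring minimal care is the passage to the opposite algebra, where one must check that the monomial-relation hypothesis is preserved; this is immediate since reversing paths is a bijection on monomials and sends the finite generating set of $I$ to a finite generating set of $I'$ consisting of monomials.
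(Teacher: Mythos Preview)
Your proof is correct and follows exactly the paper's approach: the paper derives the corollary ``in the view of Lemma~\ref{lem:finite_GB}'' from Corollary~\ref{cor:groebner_finite_are_coherent}, which is precisely your argument for right coherence. Your explicit opposite-quiver argument for left coherence is the natural way to fill in what the paper leaves implicit.
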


\section{Locally coherent Grothendieck categories of graded modules}

In this 
section, 
we collect some known facts about locally coherent Grothendieck categories. 
We use these facts to connect the properties of categories $\qgr$ and $\Qgr$ in Propostition~\ref{prop:qgr=fp Qgr}.


\label{App:locally_coherent_categories}

 Let $\mathcal{A}$ be an abelian category, and let $\mathcal{S}$ be a nonempty full subcategory of $\mathcal{A}$. $\mathcal{S}$ is a \textsf{Serre subcategory} provided that it is closed under extensions, subobjects, and quotient objects. 
 For Serre subcategories and quotient categories we refer to  \cite{Gabriel1962DesAbeliennes},
\cite{Grothendieck1957SurI},
\cite{MacLane1963NaturalCommutativity}, and
\cite{Freyd1964AbelianCategories}.

If $\mathcal{S}$ is a Serre subcategory of $\mathcal{A}$ with the inclusion functor $i_{*} : \mathcal{S} \rightarrow \mathcal{A}$,
then there exists an abelian category $\mathcal{A}/\mathcal{S}$ and an exact functor $j^{*}:\mathcal{A} \rightarrow \mathcal{A} / \mathcal{S}$, which is dense and whose kernel is $\mathcal{S}$. The category $\mathcal{A}/\mathcal{S}$ and the functor $j^{*}$ are characterized by the following universal property: For any exact functor $G:\mathcal{A} \rightarrow \mathcal{B}$ such that $\mathcal{S} \subset \mathrm{Ker}(G)$ there exists a unique functor $H:\mathcal{A}/\mathcal{S} \rightarrow \mathcal{B}$ such that
\begin{eqnarray}
\xymatrix@=.4cm{
\mathcal{A}\ar[rd]_{j^*} \ar[rr]^{G} & &  \mathcal{B}  & \\
  & \mathcal{A}/\mathcal{S} \ar@{.>}[ru]_{H}
   }
    \end{eqnarray} 
commutes, that is, such that there exists a natural equivalence of functors
$H\circ j^{*}\approx G$. If $H'$ is another such functor, then $H\approx  H'$ (\cite{Faith2012Algebra:I}, Corollary 15.9).

A Serre subcategory $\mathcal{S}$ of $\mathcal{A}$ is called \textsf{localizing} provided that the functor $j^{*}:\mathcal{A} \rightarrow \mathcal{A}/\mathcal{S}$ admits a right adjoint $j_{*}:\mathcal{A}/\mathcal{S} \rightarrow \mathcal{A}$ which is called \textsf{section functor}. In this case, $j_{*}$ is fully faithful (\cite{Geigle1991PerpendicularSheaves}, Prop. 2.2).
If $\mathcal{A}$ is a Grothendieck category, then a Serre subcategory $\mathcal{S}$ is localizing if and only if $\mathcal{S}$ is closed under arbitrary coproducts (\cite{Faith2012Algebra:I}, Theorem 15.11).
If $\mathcal{A}$ is a Grothendieck category and $\mathcal{S}$ a localizing subcategory, then the quotient category $\mathcal{A} / \mathcal{S}$ is again a Grothendieck category.

\begin{definition}

Let $\mathcal{A}$ be a Grothendieck category.
\begin{itemize}
\item (1) An object $X$ is \textsf{of finite type} if whenever there are subobjects $ X_i \subset X$
for $i \in I$ satisfying
\begin{equation*}
    \sum_{i \in I} X_i =X,
\end{equation*}
then there is already a finite subset $J \subset I$ such that
\begin{equation*}
    \sum_{i \in J} X_i =X.
\end{equation*}
The subcategory of objects  of finite type is denoted by $\mathsf{fg}(\mathcal{A})$.

\item (2) An object $X$ is \textsf{finitely presented} if it is of finite type and if for any epimorphism 
$p:Y\rightarrow X$
 where 
$Y$
 is of finite type, it follows that 
$\mathrm{ker} (p)$
 is also of finite type. 
The subcategory of objects  of finite type is denoted by $\mathsf{fp}(\mathcal{A})$.
 
\item (3) An object  $X$ is \textsf{coherent} if it is of finite type and for any morphism $f
:
Y
\rightarrow
X$
 where 
$Y$
 is of finite type 
$\mathrm{ker} (f)$
 is of finite type.
 The subcategory of coherent objects is denoted by $\mathsf{coh}(\mathcal{A})$.
\end{itemize}
\end{definition}

\begin{definition}
\label{definition: locally coherent Grothendieck category}
Let $\mathcal{A}$ be a Grothendieck category.
\begin{itemize}
    \item (1) $\mathcal{A}$ is of \textsf{finite type}  provided that $\mathcal{A}$ has a generating set of  objects of finite type.
    \item (2) $\mathcal{A}$ is \textsf{locally finitely presented} provided that $\mathcal{A}$ has a generating set of finitely presented objects.
    \item (3) $\mathcal{A}$ is \textsf{locally coherent}  provided that every object of $\mathcal{A}$ is a direct limit of coherent objects.
\end{itemize}
\end{definition}

\begin{theorem}(\cite{Herzog1997TheCategory}, Theorem 1.6)
Let $\mathcal{A}$ be a locally finitely presented Grothendieck category, then
the following conditions are equivalent:
\begin{itemize}
    \item (1) $\mathcal{A}$ is locally coherent;
    \item (2) $\mathsf{fp}(\mathcal{A})$ is abelian;
    \item (3) $\mathsf{fp}(\mathcal{A})= \mathsf{coh}(\mathcal{A})$.
\end{itemize}
\end{theorem}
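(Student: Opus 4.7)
The plan is to prove the equivalences along the cycle $(1)\Leftrightarrow (3)\Leftrightarrow (2)$, since the route through (3) concentrates the categorical work on coherent objects. First I would record the general inclusion $\mathsf{coh}(\mathcal{A})\subseteq \mathsf{fp}(\mathcal{A})$: applying the coherence axiom to the particular morphism that is an epimorphism from a finite-type object gives finite-type kernel, which is exactly finite presentation. So the real content of (3) is the reverse inclusion $\mathsf{fp}(\mathcal{A})\subseteq \mathsf{coh}(\mathcal{A})$.

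For $(1)\Rightarrow (3)$, take $X\in \mathsf{fp}(\mathcal{A})$ and use local coherence to write $X=\varinjlim_i X_i$ as a directed colimit of coherent objects. Because $X$ is finitely presented, $\mathrm{Hom}(X,-)$ commutes with this filtered colimit, so $\mathrm{id}_X$ factors as $X\xrightarrow{s} X_i\xrightarrow{p} X$ with $ps=\mathrm{id}_X$. Thus $X$ is a retract of a coherent object, and coherence passes to retracts: the split monomorphism $s$ identifies $\ker(V\to X)$ with $\ker(V\to X_i)$ for any morphism from an object $V$ of finite type, so $X\in \mathsf{coh}(\mathcal{A})$. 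The reverse implication $(3)\Rightarrow (1)$ is immediate: since $\mathcal{A}$ is locally finitely presented, every object is a directed colimit of finitely presented objects, which under (3) are coherent.

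For $(3)\Rightarrow (2)$, it suffices to check that $\mathsf{coh}(\mathcal{A})$ is closed under kernels and cokernels. Given $f:X\to Y$ between coherent objects, $\ker(f)$ is of finite type by coherence of $Y$, and its coherence follows because any $g:V\to \ker(f)$ has the same kernel as the composite $V\to X$, which is of finite type by coherence of $X$. For $\mathrm{coker}(f)$ and any $g:V\to \mathrm{coker}(f)$ with $V$ of finite type, I would form the pullback $P=V\times_{\mathrm{coker}(f)} Y$, note that $P$ sits in a short exact sequence $0\to \mathrm{Im}(f)\to P\to V\to 0$ and is therefore of finite type, and conclude that $\ker(g)\cong \ker(P\to Y)$ is of finite type by coherence of $Y$. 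Conversely, for $(2)\Rightarrow (3)$, given $X\in \mathsf{fp}(\mathcal{A})$ and $f:Y\to X$ with $Y$ of finite type, I would write $Y$ as a quotient of some $Z\in \mathsf{fp}(\mathcal{A})$ (available because in a locally finitely presented Grothendieck category every object of finite type is an epimorphic image of a finitely presented one); then $\ker(Z\to X)$ is finitely presented by (2) and surjects onto $\ker(f)$, so $\ker(f)$ is of finite type and $X$ is coherent.

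The main obstacle I anticipate is the cokernel step of $(3)\Rightarrow (2)$: the pullback construction depends on the class of finite-type objects being closed under extensions and on the compatibility of pullbacks with epimorphisms in a Grothendieck category, both standard but demanding careful verification. A secondary subtlety is the input needed in $(2)\Rightarrow (3)$ that finite-type objects are quotients of finitely presented ones, which must be deduced from the existence of a generating set of finitely presented objects together with closure of $\mathsf{fp}(\mathcal{A})$ under finite coproducts.
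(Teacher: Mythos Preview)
The paper does not prove this theorem; it is quoted verbatim from Herzog \cite{Herzog1997TheCategory} as background material, with no argument supplied. There is therefore no ``paper's own proof'' to compare your proposal against.

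That said, your outline is essentially the standard proof and is correct. A few remarks on the points you flagged. In $(1)\Rightarrow(3)$ you silently use that a finitely presented object $X$ (in the paper's sense) satisfies the categorical property that $\mathrm{Hom}(X,-)$ preserves filtered colimits; this is true in a locally finitely presented Grothendieck category but is itself a small lemma, so you should either cite it or sketch why it holds. In the cokernel step of $(3)\Rightarrow(2)$ your pullback argument is fine: the identification $\ker(g)\cong\ker(P\to Y)$ holds in any abelian category for a pullback square, and closure of finite-type objects under extensions follows from AB5 exactly as you anticipate. Finally, in $(2)\Rightarrow(3)$ the fact that every finite-type object is a quotient of a finitely presented one is indeed available from the generating set, once you know finite coproducts of finitely presented objects are finitely presented; with the paper's intrinsic definition this last point is not entirely tautological, so a one-line justification would tighten the argument.
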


\begin{theorem}(\cite{Krause1997TheCategory}, Theorem 2.6)
\label{theorem: coherent-equivalence}
Let $\mathcal{A}$ be a locally coherent category and suppose that $\mathcal{C}$ is a Serre subcategory of $\mathcal{A}$. Then the following are equivalent:
\begin{itemize}
    \item (1) The inclusion functor $\mathcal{C}\rightarrow \mathcal{A}$ admits a right adjoint $t: \mathcal{A}\rightarrow \mathcal{C}$.
    \item (2) The quotient functor $q:\mathcal{A} \rightarrow \mathcal{A}/\mathcal{C}$ admits a right adjoint $s: \mathcal{A}/\mathcal{C}\rightarrow \mathcal{A}$.
\end{itemize}
If the above conditions (1)--(2) are satisfied, then the following are equivalent:
\begin{itemize}
    \item (3) $t$ commutes with direct limits;
    \item (4) $s$ commutes with direct limits.
\end{itemize}
If the above conditions (1)--(4) are satisfied, then $\mathcal{C}$ and $\mathcal{A}/\mathcal{C}$ are locally coherent and the following diagram of functors commutes where the unlabeled functors are inclusions.
\begin{eqnarray*}
 \xymatrix{
 \mathsf{fp}(\mathcal{C})  \ar[r] \ar[d]  &  \mathsf{fp}(\mathcal{A})  \ar[r]_{\mathsf{fp}(q)} \ar[d] & \mathsf{fp}(\mathcal{A})/\mathsf{fp}(\mathcal{C})  \ar[d]  \\
\mathcal{C}  \ar[r]  & \mathcal{A}  \ar[r]_{q} & \mathcal{A}/\mathcal{C}
   }\\
    \end{eqnarray*}
Moreover, there is a unique functor $f: \mathsf{fp}(\mathcal{A})/\mathsf{fp}(\mathcal{C})\rightarrow \mathsf{fp}(\mathcal{A}/\mathcal{C})$ such that $\mathsf{fp}(q)=f\circ p$, where $p:\mathsf{fp}(\mathcal{A})\rightarrow \mathsf{fp}(\mathcal{A})/\mathsf{fp}(\mathcal{C})$ denotes the quotient functor. The functor $f$ is an equivalence.
\end{theorem}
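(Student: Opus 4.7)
The plan is to prove the theorem in three stages: establish (1) $\Leftrightarrow$ (2), then (3) $\Leftrightarrow$ (4) under (1)--(2), and finally derive local coherence of $\mathcal{C}$ and $\mathcal{A}/\mathcal{C}$ together with the equivalence $f$. The first stage is essentially Gabriel's classical characterization of localizing subcategories: in a Grothendieck category, a Serre subcategory is closed under arbitrary coproducts if and only if either adjoint exists, compare Faith's Theorem 15.11 cited above. Given $q \dashv s$, one obtains $t$ as the kernel of the unit $\eta_X \colon X \to sqX$, which is precisely the ``torsion subobject'' $\sum\{X' \subseteq X : X' \in \mathcal{C}\}$. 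Conversely, given $i_* \dashv t$, the quotient $X/tX$ has no $\mathcal{C}$-subobject, and one sets $sY$ to be the unique $\mathcal{C}$-torsion-free and $\mathcal{C}$-closed representative of $Y \in \mathcal{A}/\mathcal{C}$.

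For (3) $\Leftrightarrow$ (4) I would exploit the fundamental four-term exact sequence
\[
0 \to tX \to X \to sqX \to X'' \to 0, \qquad X'' \in \mathcal{C},
\]
obtained from the unit $\eta_X$ by noting that $q\eta_X$ is the identity, so both $\ker\eta_X$ and $\mathrm{coker}\,\eta_X$ lie in $\mathcal{C}$. Since $q$ preserves every colimit as a left adjoint, and direct limits are exact in the Grothendieck category $\mathcal{A}$, applying $\varinjlim$ to this sequence over a direct system and comparing with the corresponding sequence for the colimit yields that $t$ commutes with $\varinjlim$ if and only if $sq$ does, which by essential surjectivity of $q$ is equivalent to $s$ commuting with $\varinjlim$.

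Under (1)--(4), the key observation is that commutation of $s$ with direct limits is exactly the condition for $q$ to preserve finite presentation: for $X \in \mathsf{fp}(\mathcal{A})$ and a direct system $Y_\alpha$ in $\mathcal{A}/\mathcal{C}$,
\[
\mathrm{Hom}(qX, \varinjlim Y_\alpha) \cong \mathrm{Hom}(X, s\varinjlim Y_\alpha) \cong \varinjlim \mathrm{Hom}(X, sY_\alpha) \cong \varinjlim \mathrm{Hom}(qX, Y_\alpha),
\]
so $qX \in \mathsf{fp}(\mathcal{A}/\mathcal{C})$. Dually, $t$ sends $\mathsf{fp}(\mathcal{A})$ into $\mathsf{fp}(\mathcal{C})$. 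Hence both $\mathcal{C}$ and $\mathcal{A}/\mathcal{C}$ inherit generating sets of finitely presented objects and are locally coherent by the Herzog criterion cited just above. The diagram commutes tautologically; since $\mathsf{fp}(q)$ is exact (as $\mathsf{fp}(\mathcal{A})$ is abelian) and annihilates $\mathsf{fp}(\mathcal{C})$, the universal property of the Serre quotient produces a unique factorization $\mathsf{fp}(q) = f \circ p$.

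The expected main obstacle is proving that $f$ is an equivalence. Essential surjectivity: any $Y \in \mathsf{fp}(\mathcal{A}/\mathcal{C})$ is of the form $qX$ for some $X \in \mathsf{fp}(\mathcal{A})$, obtained by approximating $sY$ by a finitely presented preimage and then truncating the excess kernel (which can be chosen finitely generated and hence absorbed modulo $\mathcal{C}$). For fully faithfulness, the Hom in the Gabriel localization is the filtered colimit
\[
\mathrm{Hom}_{\mathcal{A}/\mathcal{C}}(qX, qY) = \varinjlim_{X', Y'} \mathrm{Hom}_{\mathcal{A}}(X', Y/Y'),
\]
taken over subobjects with $X/X', Y' \in \mathcal{C}$. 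The crux is that, for $X, Y \in \mathsf{fp}(\mathcal{A})$, this colimit is cofinally computed over the finitely presented restriction $X/X', Y' \in \mathsf{fp}(\mathcal{C})$, which is exactly the Hom formula in $\mathsf{fp}(\mathcal{A})/\mathsf{fp}(\mathcal{C})$. Local coherence is indispensable here: coherence of $\mathsf{fp}(\mathcal{A})$ ensures that every finitely generated subobject of a finitely presented object is itself finitely presented, so the finitely presented $\mathcal{C}$-layers form a cofinal system in the colimit.
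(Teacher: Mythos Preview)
The paper does not prove this theorem at all: it is quoted with attribution to Krause (\cite{Krause1997TheCategory}, Theorem~2.6) and then used as a black box in the proof of Proposition~\ref{prop:qgr=fp Qgr}. There is therefore no ``paper's own proof'' to compare against; your sketch is in effect a reconstruction of Krause's argument.

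As such a reconstruction, the outline is sound and follows the standard route, but two steps are not yet justified. First, the sentence ``both $\mathcal{C}$ and $\mathcal{A}/\mathcal{C}$ inherit generating sets of finitely presented objects and are locally coherent by the Herzog criterion'' conflates \emph{locally finitely presented} with \emph{locally coherent}: Herzog's criterion (Theorem~1.6 cited in the paper) requires in addition that $\mathsf{fp}(\mathcal{A}/\mathcal{C})$ be abelian, and you have not shown this at that point. In fact it follows \emph{from} the equivalence $f$, so the clean logical order is to establish $f$ first and then read off local coherence of $\mathcal{A}/\mathcal{C}$; for $\mathcal{C}$ one argues directly that $\mathsf{fp}(\mathcal{C})=\mathsf{fp}(\mathcal{A})\cap\mathcal{C}$ is a Serre subcategory of the abelian category $\mathsf{fp}(\mathcal{A})$, hence abelian. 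Second, ``dually, $t$ sends $\mathsf{fp}(\mathcal{A})$ into $\mathsf{fp}(\mathcal{C})$'' is not a formal dual of the displayed computation for $q$: the adjunction $i_*\dashv t$ points the wrong way for that Hom-manipulation. One must argue separately that $tX$ is finitely generated when $X\in\mathsf{fp}(\mathcal{A})$, which genuinely uses condition~(3) together with coherence of $X$. The cofinality argument you give for full faithfulness of $f$ is the real heart of the matter, and your description of it is correct.
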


\begin{proposition}
\label{prop:qgr=fp Qgr}
Let $A$ be a 
finitely generated 
right graded coherent algebra.  Then the inclusion functor $\mathsf{gr} (A) \to \mathsf{Gr} (A)$
induces an equivalence of categories $\mathsf{qgr} A\equiv \mathsf{fp} (\Qgr A)$.
\end{proposition}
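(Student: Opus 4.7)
The plan is to invoke Krause's Theorem~\ref{theorem: coherent-equivalence} with $\mathcal{A} = \mathsf{Gr}(A)$ and Serre subcategory $\mathcal{C} = \mathsf{Tors}(A)$; once its four hypotheses are verified it delivers $\mathsf{fp}(\mathcal{A})/\mathsf{fp}(\mathcal{C}) \simeq \mathsf{fp}(\mathcal{A}/\mathcal{C})$, which is exactly the asserted $\mathsf{qgr}(A) \simeq \mathsf{fp}(\mathsf{Qgr}(A))$. First I would identify $\mathsf{Gr}(A)$ as a locally coherent Grothendieck category with $\mathsf{fp}(\mathsf{Gr}(A)) = \mathsf{gr}(A)$: finite generation of $A$ provides the family of shifts $\{A(n)\}_{n\in\mathbb{Z}}$ as a generating set of finitely presented objects, so $\mathsf{Gr}(A)$ is locally finitely presented, and the right graded coherence of $A$ is exactly the statement that $\mathsf{fp}(\mathsf{Gr}(A)) = \mathsf{gr}(A)$ is abelian. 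Herzog's theorem cited above then upgrades this to local coherence.

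Next I would verify that $\mathsf{Tors}(A)$ is a localizing Serre subcategory with $\mathsf{fp}(\mathsf{Tors}(A)) = \mathsf{tors}(A)$. Closure under subobjects, quotients, extensions and arbitrary coproducts is immediate from the definition of torsion, so Faith's theorem cited in Section~\ref{App:locally_coherent_categories} makes $\mathsf{Tors}(A)$ localizing and supplies the right adjoints required by conditions (1)--(2). A finitely generated graded torsion module has finitely many homogeneous generators, each annihilated by $A_{\geq N}$ for some $N$; hence it is supported in a bounded range of degrees, and since each $A_k$ is finite-dimensional, it is itself finite-dimensional. This identifies $\mathsf{fp}(\mathsf{Tors}(A))$ with $\mathsf{tors}(A)$.

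The main technical obstacle is verifying condition (3)/(4), namely that the torsion functor $t$ commutes with direct limits. The key input is that each right ideal $A_{\geq n}$ is finitely generated, a consequence of $A$ being finitely generated in bounded degrees with finite-dimensional graded components. Given a filtered system $(M_i)$ and a homogeneous $m \in t(\varinjlim M_i)$ lifted from some $m_{i_0} \in M_{i_0}$ with $m \cdot A_{\geq n} = 0$ in the colimit, finite generation of $A_{\geq n}$ lets one pass to a single index $j \geq i_0$ at which $\phi_{i_0 j}(m_{i_0})$ is killed by every generator of $A_{\geq n}$, and therefore by the whole of $A_{\geq n}$. Thus $\phi_{i_0 j}(m_{i_0}) \in t(M_j)$, which shows that the natural map $\varinjlim t(M_i) \to t(\varinjlim M_i)$ is surjective; the opposite inclusion is automatic, so $t$ preserves filtered colimits. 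With all hypotheses of Krause's theorem in place, the equivalence
\[
\mathsf{qgr}(A) = \mathsf{gr}(A)/\mathsf{tors}(A) = \mathsf{fp}(\mathsf{Gr}(A))/\mathsf{fp}(\mathsf{Tors}(A)) \simeq \mathsf{fp}(\mathsf{Gr}(A)/\mathsf{Tors}(A)) = \mathsf{fp}(\mathsf{Qgr}(A))
\]
follows directly.
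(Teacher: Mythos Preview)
Your proposal is correct and follows essentially the same route as the paper: both apply Krause's Theorem~\ref{theorem: coherent-equivalence} to $\mathcal{A}=\mathsf{Gr}(A)$ and $\mathcal{C}=\mathsf{Tors}(A)$, and the heart of the argument in each case is the verification that the torsion functor $t$ commutes with filtered colimits via the finite generation of $A_{\ge n}$. Your write-up in fact supplies a few details the paper leaves implicit (why $\mathsf{Gr}(A)$ is locally finitely presented, why $\mathsf{fp}(\mathsf{Tors}(A))=\mathsf{tors}(A)$), but there is no substantive difference in strategy.
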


\begin{proof}
By definition, $\mathsf{Gr} (A)$  is a locally coherent category with $\mathsf{fp} (\mathsf{Gr} (A))  = \mathsf{gr} (A)$. Moreover, $\mathsf{Tors} (A)$ is its Serre subcategory and $\mathsf{fp}( \mathsf{Tors} (A))= \mathsf{tors} (A)$. Since $\mathsf{Tors} (A)$ is a localizing subcategory,  the condition (1) of Theorem \ref{theorem: coherent-equivalence} is satisfied. Note that the right adjoint to the inclusion functor $\mathsf{Tors} (A) \to \Gr A$ is the functor $t: \mathsf{Gr} (A)\rightarrow \mathsf{Tors} (A)$
 which sends a module $M$ to its maximal torsion submodule $\Tors M$.

Let us show that this functor $t$
commutes with direct limits. 
Let $(\Lambda,\leq)$ be a directed partially ordered set, $\left(
(M_{\lambda})_{\lambda \in \Lambda}, \{ f_{\lambda \mu}\}_{\lambda \le \mu}
\right) $ be a direct system in $\mathsf{Gr} (A)$ 
indexed by $\Lambda$, 
and $M= \varinjlim M_\lambda$. We identify $T = \varinjlim \Tors M_\lambda$ with a submodule 
of $M$. We need to prove that $T = \Tors M$.


Since $T$ consists of torsion elements, we conclude that $T\subset \Tors M$. 
Reversely, for  $x\in \Tors M$ let $n\ge 0$ be such that  $x A_{\geq n} =0$.
The module $A_{\geq n}$ is finitely generated; let $Y$ be some finite generating set.
Then, for each $y\in Y$ there exists $\lambda(y)\in \Lambda$
and $x_y \in M_{\lambda(y)}$ such that $x_y$ represents $x$ and $x_y y =0$.
Since $Y$ is finite,  there exists $\mu \ge \max \{\lambda(y) | y\in Y\}$ and $x_\mu \in M_{\mu}$ such that $x_\mu = f_{\lambda(y), \mu} (x_y)$ for each $y\in Y$.
Hence, $x_\mu$ represents $x$ and $x_\mu A_{\ge n} = x_\mu F A =0$, so that 
$x_\mu \in \Tors M_{\mu}$. 
It follows that $x \in T$. 
Thus, $T = \Tors M$.

Hence the functor $t$
commutes with direct limits, and the condition (3) of Theorem \ref{theorem: coherent-equivalence} is satisfied. Therefore
\begin{equation*}
\mathsf{qgr} (A) \equiv \mathsf{fp}(\mathsf{Gr} (A)/\mathsf{Tors} (A)) \ (= \mathsf{fp}(\mathsf{Qgr}(A))). \ \ \square
\end{equation*}
\end{proof}

\section{Ufnarovski graph and Holdaway--Smith map}

\label{sec:Ufnarovski_n_Holdaway--Smith}

\subsection{The Ufnarovski graph}
\label{subs:The Ufnarovski graph}

In \cite{Ufnarovskii1982AWords}, Ufnarovskii associates to any monomial algebra $A$ a directed graph $Q_A$ which has the same growth.
The Ufnarovskii graph is  defined as follows.

Let  $A={k\Gamma}/{(F)}$ be a graded monomial algebra of the form (\ref{form:monomial algebra}). 
Words not in $(F)$ are called \textsf{legal}. The set of legal words is denoted ${L}$. We write ${L}_n $ for the set of legal words of length $n$.
Words in $F$ are said to be \textsf{forbidden}. Let $l + 1$ be the maximum length of a forbidden word, i.e.,
\begin{equation*}
l + 1 := \mathrm{max} \ \{t | F\cap (k\Gamma)_{t} \not= \emptyset \}.
\end{equation*}

The \textsf{Ufnarovskii graph}  of $A$ is denoted ${Q}_A=({Q}_0,{Q}_1,s,t)$, where $Q_0$ is the set of vertices, $Q_1$ the set of arrows, and $s, t : Q_1 \rightarrow Q_0$ are maps which assign to each arrow $w$ its starting vertex $s(w)$ and its terminating vertex $t(w)$. 
Ufnarovskii graph is defined as follows. 
\begin{eqnarray*}
& {Q}_0 := {L}_{l}.\\
&{Q}_1 := {L}_{l+1}.\\
&s(w) := \text{the  unique word in} \  {L}_l \  \text{such  that}\  w\in s(w)L_1.\\
&t(w) := \text{the  unique word in} \  {L}_l \  \text{such  that}\  w\in L_1t(w).
\end{eqnarray*}

\begin{example}\label{example}
Let $A$ be the monomial algebra 
\begin{equation*}
A = \frac{k\langle x, y, z \rangle}{(x^2, yx, zy, xz, z^2, y^4)}.
\end{equation*}
The sets of legal words of length 3 and 4 are 
\begin{eqnarray*}
&Q_0 = \{ xy^2, xyz, y^2z, yzx, zxy, y^3 \}; \\
& Q_1 = \{ xy^2z, xyzx, y^2zx, yzxy, zxy^2, zxyz, y^3z, xy^3 \}. 
\end{eqnarray*}
Hence, the Ufnarovskii graph $Q_A$ is given by
\begin{eqnarray*}
 \xymatrix{
    &  zxy  \ar[r]_{zxy^2} \ar[ld]_{zxyz} &  xy^2   \ar[dd]_{xy^2z} \ar[rd]_{xy^3}&  \\
     xyz  \ar[rd]_{xyzx}  &  &   & {y^3 } \ar[ld]_{y^3z} \\
    & yzx  \ar[uu]_{yzxy} & y^2z\ar[l]_{y^2zx}  &
   }\\
    \end{eqnarray*}

\end{example}

We label arrows in $Q_A$ by elements in $L_1$. The label attached to an arrow $w$ is the first letter of $w$. For example, the label attached to $zxyy$ is $z$.

\begin{example}
The labeling for the Ufnarovskii graph in example \ref{example} is
\begin{eqnarray*}
 \xymatrix{
    &  zxy  \ar[r]_{z} \ar[ld]_{z} &  xy^2   \ar[dd]_{x} \ar[rd]_{x}&  \\
     xyz  \ar[rd]_{x}  &  &   & {y^3 } \ar[ld]_{y} \\
    & yzx \ar[uu]_{y} & y^2z\ar[l]_{y}   &
   }\\
    \end{eqnarray*}
\end{example}

\begin{proposition}
\label{prop:Main theorem-3}
\begin{equation*}
\mathsf{h}_{alg}(A) =  \mathsf{h}_{alg}(kQ_A).
\end{equation*}

\end{proposition}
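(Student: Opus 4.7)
The strategy is to compare the graded dimensions of $A$ and $kQ_A$ term by term through a direct bijection between paths in the Ufnarovski graph and legal words in $A$, and then to observe that a bounded degree shift is invisible in the $n$-th-root limit defining the entropy.

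The main step is a dictionary: for every $n \geq 1$ there is a natural bijection between paths of length $n$ in $Q_A$ and legal words of length $n+l$ in $A$. Indeed, given a composable path $w_1 w_2 \cdots w_n$ with $w_i \in L_{l+1}$, the compatibility $t(w_i) = s(w_{i+1})$ means consecutive arrows overlap in $l$ letters, so the concatenation assembles into a single word $u$ of length $n+l$ whose successive length-$(l+1)$ windows are exactly the $w_i$. This $u$ is legal, since any forbidden subword has length at most $l+1$ and would therefore have to sit inside some window $w_i \in L_{l+1}$, which is impossible. Conversely, for any legal word $u$ of length $n+l$, each length-$(l+1)$ subword is legal (as a subword of a legal word), hence belongs to $L_{l+1} = Q_1$; reading off these successive windows produces a composable path of length $n$ in $Q_A$.

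From this bijection one obtains $\dim (kQ_A)_n = |L_{n+l}| = \dim A_{n+l}$ for all $n \geq 1$. Setting $a_m := \dim A_m$, the two entropies then read
\begin{equation*}
\mathsf{h}_{alg}(A) = \varlimsup_{m \to \infty} \sqrt[m]{a_m}, \qquad \mathsf{h}_{alg}(kQ_A) = \varlimsup_{n \to \infty} \sqrt[n]{a_{n+l}}.
\end{equation*}
To identify these, I will reindex $m = n + l$ and use the identity $\sqrt[n]{a_{n+l}} = \bigl(\sqrt[m]{a_m}\bigr)^{m/(m-l)}$. Since $A$ is finitely generated in degrees $0$ and $1$ the sequence $\sqrt[m]{a_m}$ is bounded, and the exponent $m/(m-l)$ tends to $1$, so raising to this power preserves the limsup and the two quantities coincide.

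I expect the only delicate point to be the bijection in the first step, where one must genuinely use the fact that $l+1$ bounds the length of every forbidden word in order to promote legality of the length-$(l+1)$ windows to legality of the whole word. Once the bijection is in place, the passage to entropies is a routine matter of absorbing a bounded degree shift into an $n$-th-root limit.
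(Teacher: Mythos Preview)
Your proof is correct and follows essentially the same route as the paper: establish the bijection between paths of length $n$ in $Q_A$ and legal words of length $n+l$ in $A$, deduce $\dim(kQ_A)_n=\dim A_{n+l}$, and absorb the fixed shift $l$ into the $n$-th-root limit. The paper simply cites the bijection from \cite{Holdaway2012AnQuivers} and \cite{Ufnarovskii1982AWords} rather than spelling it out, and passes to the limit in one line, but the content is the same.
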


\begin{proof}
If $m>l$, then each word $x_1x_2\cdots x_m$ not in $(F)$  is uniquely associated with a path labeled $x_1x_2\cdots x_{m-l}$ of length $m-l$. This correspondence is bijective \cite[Lemma 3.1]{Holdaway2012AnQuivers}, \cite{Ufnarovskii1982AWords}.
Hence 
\begin{equation*}
\mathsf{dim}_k A_m = \mathsf{dim}_k (kQ_A)_{m-l},
\end{equation*}
and we have 
\begin{equation*}
\mathsf{h}_{alg}(A) = \overline{\lim}_{m\rightarrow +\infty} \sqrt[m]{\mathsf{dim}_k A_m} = \overline{\lim}_{m\rightarrow +\infty} \sqrt[m+l]{\mathsf{dim}_k (k Q_A)_m} =  \mathsf{h}_{alg}(kQ_A). 
\end{equation*}
\end{proof}

\subsection{The Holdaway--Smith homomorphism}
Holdaway and Smith~\cite{Holdaway2012AnQuivers} have defined an algebra  homomorphism $f: A \to kQ_A$, this homomorphism were denoted by $\bar f$. 
It is defined on the generators of $A$ as follows.

Let $f:k\Gamma \rightarrow kQ_A$ be the unique algebra homomorphism such that for all $x \in L_1$,
\begin{equation*}
f(x) = \left\{
             \begin{array}{lr}
             \text{the  sum  of  all  arrows  labeled} \  x, &  \\
             0, \text{if there are no arrows labeled} \ x .&
             \end{array}
\right.
\end{equation*}
Then the homomorphism $f:k\Gamma \rightarrow kQ_A$ induces a homomorphism of graded algebras $\bar f :A\rightarrow kQ_A$ (\cite{Holdaway2012AnQuivers}, Proposition 3.3).

\begin{theorem}(\cite{Holdaway2012AnQuivers}, Theorem 1.1)
\label{th:Holdaway-Smith}
Let $A$ be a graded monomial algebra of the form (\ref{form:monomial algebra}). 
Let $Q_A$ be its Ufnarovskii graph and view $k$ as a left $A$-module through the homomorphism $\bar f : A \rightarrow kQ_A$.
Then $-  \otimes_A  kQ_A$ induces an equivalence of categories 
$\mathsf{Qgr} (A) \equiv \mathsf{Qgr} (kQ_A)$.
\end{theorem}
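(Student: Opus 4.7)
The plan is to establish the equivalence via the adjoint pair $(F,G)$ with $F = -\otimes_A kQ_A\colon \mathsf{Gr}(A)\to\mathsf{Gr}(kQ_A)$ and $G = \bar f_{*}$ the restriction of scalars along $\bar f$. First I would verify that $\bar f$ is well defined: for any monomial relation $w = x_{i_1}\cdots x_{i_k}\in F$ with $k\leq l+1$, the element $\bar f(w) = \bar f(x_{i_1})\cdots \bar f(x_{i_k})$ is a sum over composable length-$k$ paths of $Q_A$ whose label sequence reads $w$. Any such path would correspond, under the Ufnarovski bijection, to a legal word of length $k+l$ containing $w$ as a subword, which contradicts the forbidden status of $w$; hence $\bar f(w)=0$.

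Next I would show that both $F$ and $G$ preserve torsion, so that they descend to $\mathsf{Qgr}$. For $G$ this is clear, because a finite-dimensional $kQ_A$-module remains finite dimensional as an $A$-module. For $F$, a finitely generated torsion $A$-module $M$ is annihilated by $A_{\geq n}$ for some $n$; since $\bar f$ preserves degree, $M\otimes_A kQ_A$ is then annihilated by $(kQ_A)_{\geq n}$ and is torsion. The higher functors $\mathrm{Tor}^A_i(-,kQ_A)$ must also be shown to land in $\mathsf{Tors}(kQ_A)$, which would follow from the existence of a graded resolution of $kQ_A$ by finitely generated projective $A$-modules whose $n$-th graded piece stabilises for $n\gg 0$.

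The central step is to check that the unit $\eta_M\colon M\to GF(M)$ and the counit $\varepsilon_N\colon FG(N)\to N$ become isomorphisms in $\mathsf{Qgr}$. For this I would exploit the decomposition $kQ_A = \bigoplus_{v\in Q_0}\, v\cdot kQ_A$ into cyclic right $A$-modules indexed by the vertex set $Q_0=L_l$ of the Ufnarovski graph. Using the bijection behind Proposition \ref{prop:Main theorem-3}, the graded piece $(v\cdot kQ_A)_n$ is identified with the space of length-$(n+l)$ legal words beginning with $v$, which matches the graded component of the right ideal $v\cdot A\subset A$ up to a shift by $l$. Thus, modulo a finite-dimensional discrepancy in low degrees, $kQ_A$ is isomorphic as a right $A$-module to a direct sum of shifted right ideals of $A$; this structural identification is the combinatorial engine for verifying that $\eta_M$ and $\varepsilon_N$ are isomorphisms in $\mathsf{Qgr}$.

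The principal obstacle is that $\bar f$ itself is far from an isomorphism: $\dim(kQ_A)_n$ is of order $\dim A_{n+l}$, much larger than $\dim A_n$, so the equivalence cannot be deduced from $\bar f$ being a torsion-isomorphism. Instead, the surplus growth of $kQ_A$ is absorbed by the vertex decomposition, and each vertex summand becomes, after passage to $\mathsf{Qgr}$, a shift of a right ideal of $A$. The hard part will be making this degree-by-degree comparison rigorous for the unit map on an arbitrary finitely generated $M$, which I would handle by reducing to cyclic modules via a finite presentation and then applying the combinatorial identification summand by summand, invoking right coherence (Corollary \ref{corollary: coherent algebras}) to control the syzygies that arise.
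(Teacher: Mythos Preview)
The paper does not give its own proof of this statement; Theorem~\ref{th:Holdaway-Smith} is quoted directly from Holdaway--Smith, so there is nothing in the present paper to compare your sketch against.

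That said, your outline is broadly in line with how Holdaway--Smith actually argue, and the combinatorial core you identify (the Ufnarovski bijection and the vertex decomposition $kQ_A\cong\bigoplus_{v\in L_l}(vA)(l)$ as a right $A$-module) is exactly right. The point you flag as ``the principal obstacle'' --- that $\operatorname{coker}\bar f$ looks large --- is where your sketch stops short of the key calculation. The resolution is not that the vertex summands are ``shifts of right ideals'' per se, but rather a direct annihilation statement: for any path $p$ in $Q_A$ of length $n$, ending at vertex $v\in L_l$ and corresponding to the legal word $u\in L_{n+l}$, one has $p\cdot a=0$ in $kQ_A$ for every monomial $a\in A_l$ with $a\neq v$, while $p\cdot v=\bar f(u)\in\operatorname{im}\bar f$. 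Hence $(\operatorname{coker}\bar f)\cdot A_{\ge l}=0$, so the cokernel is bounded torsion. This is the concrete content behind ``the surplus growth is absorbed''; without it, your decomposition only tells you that $\bar f_*\bar F(\pi A)\cong\pi A(l)$, which by itself does not show the unit is an isomorphism. Once you have this, the unit at general $M$ follows from a presentation and right exactness as you indicate, and the counit is analogous. Your remarks on $\operatorname{Tor}^A_i$ and on coherence are more than is needed at the $\mathsf{Qgr}$ level: the descent of $F$ requires only that $\operatorname{Tor}^A_1(T,kQ_A)$ be torsion for torsion $T$, and this follows from $kQ_A\cong A_{\ge l}(l)$ together with the fact that $A/A_{\ge l}$ is finite-dimensional.
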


\begin{theorem}
\label{theorem: The Holdaway--Smith homomorphism induces an equivalence of qgr}
Let $A$ be a  graded monomial algebra of the form (\ref{form:monomial algebra}). 
Let $Q_A$ be its Ufnarovskii graph and view $k$ as a left $A$-module through the homomorphism $\bar f : A \rightarrow kQ_A$.
Then the functor $-  \otimes_A  kQ_A$ induces an equivalence of categories 
$\mathsf{qgr} (A) \equiv \mathsf{qgr} (kQ_A)$.
\end{theorem}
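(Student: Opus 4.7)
The plan is to deduce this theorem from Theorem~\ref{th:Holdaway-Smith} combined with Proposition~\ref{prop:qgr=fp Qgr}. The core idea is that an equivalence $\mathsf{Qgr}(A) \equiv \mathsf{Qgr}(kQ_A)$ automatically restricts to an equivalence of the subcategories of finitely presented objects, and by Proposition~\ref{prop:qgr=fp Qgr} these subcategories are precisely $\mathsf{qgr}(A)$ and $\mathsf{qgr}(kQ_A)$.

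The first step is to verify that both $A$ and $kQ_A$ satisfy the hypotheses of Proposition~\ref{prop:qgr=fp Qgr}. The algebra $A$ is finitely generated and right graded coherent by Corollary~\ref{corollary: coherent algebras}. For the path algebra $kQ_A$, note that $Q_A$ is finite: the vertex set $Q_0 = L_l$ and the arrow set $Q_1 = L_{l+1}$ are subsets of words of bounded length in the finite alphabet $\Gamma_1$. Hence $kQ_A$ is a finitely presented monomial algebra (with empty set of relations), and Corollary~\ref{corollary: coherent algebras} applies to it as well. Proposition~\ref{prop:qgr=fp Qgr} now yields
\begin{equation*}
\mathsf{qgr}(A) \equiv \mathsf{fp}(\mathsf{Qgr}(A)), \qquad \mathsf{qgr}(kQ_A) \equiv \mathsf{fp}(\mathsf{Qgr}(kQ_A)).
\end{equation*}

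The second step is to invoke Theorem~\ref{th:Holdaway-Smith}, which asserts that $-\otimes_A kQ_A$ induces an equivalence $\mathsf{Qgr}(A) \equiv \mathsf{Qgr}(kQ_A)$. Finite presentation of an object in a Grothendieck category is defined purely in categorical terms via epimorphisms from objects of finite type, so any equivalence of Grothendieck categories restricts to an equivalence of their full subcategories of finitely presented objects. Composing this restriction with the two identifications above produces the desired equivalence $\mathsf{qgr}(A) \equiv \mathsf{qgr}(kQ_A)$, induced by $-\otimes_A kQ_A$.

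I do not foresee any serious obstacle. The two technical points that merit attention are (i) the finiteness of $Q_A$, which ensures that $kQ_A$ itself falls under the scope of Corollary~\ref{corollary: coherent algebras}, and (ii) the categorical stability of the class of finitely presented objects under equivalences of Grothendieck categories, which is routine from the definitions in Section~\ref{App:locally_coherent_categories}.
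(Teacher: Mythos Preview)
Your proposal is correct and follows essentially the same approach as the paper: both apply Corollary~\ref{corollary: coherent algebras} to $A$ and $kQ_A$, invoke Proposition~\ref{prop:qgr=fp Qgr} to identify $\mathsf{qgr}$ with $\mathsf{fp}(\mathsf{Qgr})$ on each side, and then restrict the Holdaway--Smith equivalence of Theorem~\ref{th:Holdaway-Smith} to the finitely presented objects. Your version is slightly more explicit in justifying why $Q_A$ is finite and why equivalences preserve finite presentation, but the argument is the same.
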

\begin{proof}
Since $A$ and $kQ_A$ are right coherent rings by Corollary~\ref{corollary: coherent algebras}, we can apply 
Proposition~\ref{prop:qgr=fp Qgr} and conclude that 
\begin{equation*}
\mathsf{qgr} (A) \equiv \mathsf{fp}(\mathsf{Gr} (A)/\mathsf{Tors} (A)) \ (= \mathsf{fp}(\mathsf{Qgr}(A)))
\end{equation*}
and
\begin{equation*}
\mathsf{qgr} (kQ_A) \equiv \mathsf{fp}(\mathsf{Gr} (kQ_A)/\mathsf{Tors} (kQ_A)) \ (= \mathsf{fp}(\mathsf{Qgr}( kQ_A))).
\end{equation*}
The functor $-  \otimes_A  kQ_A$ induces an equivalence of categories 
$\mathsf{Qgr} (A) \equiv \mathsf{Qgr} (kQ_A)$ by Theorem~\ref{th:Holdaway-Smith}, hence the functor
 $-  \otimes_A  kQ_A$ induces an equivalence of categories 
$\mathsf{fp}(\mathsf{Qgr}(A)) \equiv \mathsf{fp}(\mathsf{Qgr}(kQ_A))$, i.e., the functor $-  \otimes_A  kQ_A$ induces an equivalence of categories 
\begin{equation*}
\mathsf{qgr} (kQ_A) \equiv \mathsf{qgr} (A).    \ \  \square
\end{equation*}
\end{proof}

\section{Relationships between algebric, topological and category-theoretical entropies}
\label{sec:main_theorem}

Let $A$ be a graded monomial algebra of the form (\ref{form:monomial algebra}) over a field $k$, $kQ_A$ be the Ufnarovskii graph of $A$. 
$\pi: \mathsf{Gr}(kQ_A) \rightarrow \mathsf{Qgr}(kQ_A)$ be the projection functor.

\begin{notation}
\begin{itemize}
    \item (1) We write $e_i$ for the trivial path at vertex $i$. The indecomposable projective left $kQ_A$-modules are $P_1 = (kQ_A)_{e_1}, \cdots, P_n = (kQ_A)_{e_n}$.
    \item (2) We define $\mathcal O := \pi(kQ_A)$.
    \item (3) We write $\mathcal P_i = \pi(P_i)$ for the images of the indecomposable projectives in $\mathsf{Qgr}(kQ_A)$.
    \item (4) Given a graded $A$-module $M$,  we denote by $\mathsf{S}(M)$ the graded $A$-module with $\mathsf{S}(M)_d=M_{d+1}$. This is called the \textsf{Serre twist} of $M$. 
\end{itemize}
\end{notation}

The Serre twist  $\mathsf{S}$ induces an autoequivalence of $\mathsf{qgr} (kQ_A) $, which will be denoted by the same letter $\mathsf{S}$. 
Since the equivalence
of the categories $\mathsf{qgr}$ in Theorem~\ref{theorem: The Holdaway--Smith homomorphism induces an equivalence of qgr} commutes with $\mathsf{S}$, we get

\begin{proposition}
\label{Proposition: semi-simple}
Let $\mathcal O$ be as above. 
\begin{itemize}
     \item (1) (\cite{Smith2012CategoryQuivers}, Lemma 3.3) $\mathcal O$ is a projective object in $\mathsf{Qgr}(kQ_A)$.
    \item (2) (\cite{Smith2012CategoryQuivers}, Proposition 3.6) $\mathcal O$ is a generator in $\mathsf{qgr}(kQ_A)$.
    \item (3) (\cite{Smith2012CategoryQuivers}, Proposition 3.2) $\mathsf{qgr}(kQ_A)$ is a semi-simple category.
\end{itemize}
\end{proposition}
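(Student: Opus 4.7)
The three claims are stated as direct citations from Smith's paper on the category of representations of a finite quiver, so the cleanest ``proof'' consists of quoting those results. My plan, should one wish to reconstruct them here from the material of the preceding sections, is as follows.

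For (1), I would show $\mathcal{O} = \pi(kQ_A)$ is projective by rewriting
$$
\mathsf{Hom}_{\mathsf{Qgr}(kQ_A)}(\mathcal{O}, \pi N) \;\simeq\; \varinjlim_n \mathsf{Hom}_{\mathsf{Gr}(kQ_A)}\bigl((kQ_A)_{\geq n},\; N/\tau N\bigr),
$$
where $\tau$ is the torsion radical studied in the proof of Proposition \ref{prop:qgr=fp Qgr}. Each truncation $(kQ_A)_{\geq n}$ splits (as a right $kQ_A$-module) into a finite direct sum of shifted indecomposable projectives $P_i(-d)$, since the paths of length $\geq n$ in $Q_A$ decompose by their initial subpath of length $n$ into cosets under right concatenation. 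Hence each term in the colimit is exact in $N$, and since filtered colimits are exact in a Grothendieck category, $\mathsf{Hom}_{\mathsf{Qgr}}(\mathcal{O}, -)$ is exact.

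For (2), I would use Proposition \ref{prop:qgr=fp Qgr} to identify $\mathsf{qgr}(kQ_A)$ with $\mathsf{fp}(\mathsf{Qgr}(kQ_A))$. Every finitely presented graded $kQ_A$-module $M$ is a quotient of a finite direct sum $\bigoplus_j P_{i_j}(-d_j)$; since each $P_i$ is a direct summand of $kQ_A$, the image $\pi M$ is covered by a direct sum of shifts of summands of $\mathcal{O}$, which (together with the Serre twist $\mathsf{S}$ being an autoequivalence) places $\pi M$ in the thick subcategory generated by $\mathcal{O}$.

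For (3), the semi-simplicity assertion, the plan is to prove that for every $M \in \mathsf{gr}(kQ_A)$ there exists $n_0$ such that the truncation $M_{\geq n_0}$ is a finite direct sum of shifts of the $P_i$'s; one then reads off a direct sum decomposition of $\pi M$ into shifts of the $\mathcal{P}_i$'s. Simplicity of each $\mathcal{P}_i$ follows because any graded submodule $N \subset P_i$ that agrees with $P_i$ in high degree must equal $P_i$ above some cutoff, so $N$ and $P_i$ coincide modulo torsion. The main obstacle, and the heart of the proof, is this combinatorial truncation step: one must show that the projection $M_{\geq n_0} \to M_{\geq n_0}/M_{\geq n_0+1} \cdot (kQ_A)_{\geq 1}$ splits for $n_0$ large enough, which uses in an essential way that $kQ_A$ is hereditary together with the sink/source structure of the Ufnarovski graph described in Subsection \ref{subs:The Ufnarovski graph}.
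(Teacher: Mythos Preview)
The paper does not prove this proposition at all; it simply records three results of Smith with pinpoint citations, exactly as your opening sentence says. Your reconstruction sketches therefore go beyond anything the paper supplies, and there is nothing in the paper to compare them against.

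Your outlines for (1) and (2) are broadly reasonable. In (2), however, the phrase ``thick subcategory generated by $\mathcal{O}$'' is triangulated language, whereas what is needed here is that $\mathcal{O}$ generates the \emph{abelian} category $\mathsf{qgr}(kQ_A)$, i.e.\ every object is a quotient of some $\mathcal{O}^{\oplus n}$. The missing observation is that $\pi\bigl((kQ_A)_{\geq n}\bigr) \cong \mathcal{O}$ (the cokernel of the inclusion is torsion) while $(kQ_A)_{\geq n} \cong \bigoplus_{|p|=n} P_{t(p)}(-n)$; hence every $\mathcal{P}_i(-n)$ that actually occurs is already a direct summand of $\mathcal{O}$ itself, and no appeal to the Serre twist is needed.

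There is a genuine error in your sketch for (3). You claim that each $\mathcal{P}_i$ is simple, but this fails in general: take $Q_A$ to be a single vertex with two loops (so $kQ_A = k\langle x,y\rangle$ and $P_1 = kQ_A$); the right ideal $x\,kQ_A \subset P_1$ has infinite-dimensional quotient, so its image is a proper nonzero subobject of $\mathcal{P}_1$ in $\mathsf{qgr}$. Your justification (``any $N \subset P_i$ that agrees with $P_i$ in high degree must equal $P_i$ above some cutoff, so $N$ and $P_i$ coincide modulo torsion'') is circular: agreeing in high degree \emph{is} coinciding modulo torsion. Fortunately semisimplicity does not need this claim. Your truncation step, showing that $\pi M$ is a finite direct sum of shifts of the $\mathcal{P}_i$, combined with part~(1), already yields that every object of $\mathsf{qgr}(kQ_A)$ is projective, and hence that every short exact sequence splits. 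That is precisely the sense of ``semisimple'' used downstream in Lemmas~\ref{lemma: direct summand} and~\ref{lemma: rk=delta}.
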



If $\mathcal E$ is an object of an abelian category $\mathsf{A}$, let $\mathcal E^{\bullet}$ denote the complex
\begin{equation*}
\mathcal E^{\bullet}= \cdots \xrightarrow{0} 0 \xrightarrow{0} \mathcal E \xrightarrow{0} 0 \xrightarrow{0} 0 \xrightarrow{0} \cdots 
\end{equation*}
which is $\mathcal E$ in degree zero, and $0$ in other degrees. Then $\mathsf{H}^0(\mathcal E^{\bullet}) = \mathcal E$, so $\mathcal E \mapsto \mathcal E^{\bullet}$ is a fully faithful embedding of $\mathsf{A}$ into $\mathbf{D}^b(\mathsf{A})$, with left inverse given by the functor $\mathsf{H}^0$. Usually we just identify $\mathcal E$ with $\mathcal E^{\bullet}$ and regard $\mathsf{A}$ as a full subcategory of $\mathbf{D}^b(\mathsf{A})$.


\begin{lemma}
\label{lemma: direct summand}
Let $\mathsf A$ be a semi-simple abelian category with a generator $\mathcal O$ and $\mathbf{D}^b(\mathsf A)$ be its derived category, then 
\begin{itemize}
\item (1) $\mathcal O$ is also a generator in $\mathbf{D}^b(\mathsf A)$.
\item (2) If there is a distinguished triangle
\begin{equation*}
    A^{\bullet} \rightarrow B^{\bullet} \rightarrow \mathcal C^{\bullet} \rightarrow  A^{\bullet} [1]
\end{equation*}
in $\mathbf{D}^b(\mathsf A)$, 
then 
$\mathsf{H}^n(B^{\bullet})$ is a direct summand of $\mathsf{H}^n(A^{\bullet})\oplus \mathsf{H}^n(C^{\bullet})$ for each $n \in \mathbb{Z}$. 

\item (3) If there is a tower of distinguished triangles
\begin{eqnarray*}
 \xymatrix@=.4cm{
  E^{\bullet}_0 \ar[rr]^{} & &  E^{\bullet}_1 \ar[rr]^{} \ar[ld]^{} & &E^{\bullet}_2 \ar[r]^{} \ar[ld]^{}&\cdots \ar[r]^{}& E^{\bullet}_{p-1}\ar[rr]^{} & &E^{\bullet}_p \ar[ld]^{}  \\
  &  \mathcal{O}[n_1] \ar@{-->}[lu]_{} & & \mathcal{O}[n_2]  \ar@{-->}[lu]_{}& & \cdots  & & \mathcal{O}[n_p] \ar@{-->}[lu]_{}
   }
    \end{eqnarray*} 
in $\mathbf{D}^b(\mathsf A)$ with $E^{\bullet}_0 = 0$, $p\geq 0$, and $n_i\in \mathbb{Z}$. Then $\mathsf{H}^n(E^{\bullet}_p)$ is a direct summand of $\mathcal O^{\oplus s}$, where $s$ is the number of $\mathcal O[-n]$. 
\end{itemize}
\end{lemma}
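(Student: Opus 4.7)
The plan hinges on two structural features of a semi-simple abelian category $\mathsf{A}$: every short exact sequence splits, so $\mathrm{Ext}^{\ge 1}$ vanishes and every bounded complex is formal (canonically isomorphic in $\mathbf{D}^b(\mathsf{A})$ to the direct sum $\bigoplus_n \mathsf{H}^n(X^{\bullet})[-n]$ of its shifted cohomology); and, combined with semi-simplicity, ``$\mathcal{O}$ is a generator'' forces any surjection $\mathcal{O}^{\oplus s}\twoheadrightarrow X$ to split, so every $X\in\mathsf{A}$ is a direct summand of some finite power $\mathcal{O}^{\oplus s}$.

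For (1) I would verify that the thick subcategory $\langle\mathcal{O}\rangle\subset\mathbf{D}^b(\mathsf{A})$ contains every object. Formality reduces this to each $\mathsf{H}^n(X^{\bullet})[-n]$ separately; each $\mathsf{H}^n(X^{\bullet})$ is a direct summand of some $\mathcal{O}^{\oplus s_n}$; and $\langle\mathcal{O}\rangle$ is closed under shifts, finite direct sums, and direct summands.

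For (2) I would apply $\mathsf{H}^n$ to the distinguished triangle, obtaining the long exact sequence
\begin{equation*}
\cdots \to \mathsf{H}^n(A^{\bullet}) \xrightarrow{\alpha} \mathsf{H}^n(B^{\bullet}) \xrightarrow{\beta} \mathsf{H}^n(C^{\bullet}) \to \cdots.
\end{equation*}
Set $M=\mathrm{im}(\alpha)=\ker(\beta)$ and $N=\mathrm{im}(\beta)$. Semi-simplicity splits the three short exact sequences $0\to\ker\alpha\to\mathsf{H}^n(A^{\bullet})\to M\to 0$, $0\to M\to\mathsf{H}^n(B^{\bullet})\to N\to 0$, and $0\to N\to\mathsf{H}^n(C^{\bullet})\to\mathrm{coker}\,\beta\to 0$, so $\mathsf{H}^n(B^{\bullet})\cong M\oplus N$ with $M$ a summand of $\mathsf{H}^n(A^{\bullet})$ and $N$ a summand of $\mathsf{H}^n(C^{\bullet})$, which gives the claim.

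Part (3) is then a straightforward induction on $p$: the case $p=0$ is trivial since $E^{\bullet}_0=0$, and for the inductive step part~(2) applied to the triangle $E^{\bullet}_{p-1}\to E^{\bullet}_p\to\mathcal{O}[n_p]\to E^{\bullet}_{p-1}[1]$ shows $\mathsf{H}^n(E^{\bullet}_p)$ is a summand of $\mathsf{H}^n(E^{\bullet}_{p-1})\oplus\mathsf{H}^n(\mathcal{O}[n_p])$. Since $\mathsf{H}^n(\mathcal{O}[n_p])$ equals $\mathcal{O}$ when $n_p=-n$ and vanishes otherwise, the inductive hypothesis together with the transitivity of the ``direct summand'' relation delivers the summand bound $\mathcal{O}^{\oplus s}$ with $s=\#\{\,i:n_i=-n\,\}$. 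The main technical point is the formality assertion used in~(1); parts (2) and (3) are essentially formal cohomological bookkeeping once splitting of short exact sequences is available.
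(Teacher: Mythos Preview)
Your proof is correct and follows essentially the same approach as the paper: formality of complexes in a semi-simple category for (1), the long exact cohomology sequence together with splitting of short exact sequences for (2), and iteration of (2) along the tower for (3). Your treatment is in fact more explicit than the paper's, which simply writes down the long exact sequence and asserts ``hence'' for (2), and computes $\mathsf{H}^j(\mathcal{O}[t])$ and invokes (2) without spelling out the induction for (3).
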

\begin{proof}
(1) Since $\mathsf A$ is a semi-simple category, 
every object 
\begin{equation*}
M^{\bullet}= \cdots \xrightarrow{d^{-2}} M^{-1} \xrightarrow{d^{-1}} M^{0} \xrightarrow{d^0} M^{1} \xrightarrow{d^1} M^{2} \xrightarrow{d^2} \ldots\xrightarrow{d^{j-1}} M^{j} \xrightarrow{d^{j}}  \cdots
\end{equation*}
in $\mathbf{D}^b(\mathsf A)$ is isomorphic to the direct sum of its cohomologies 
\begin{equation*}
\cdots \xrightarrow{0} \mathsf{H}^{-1} (M^{\bullet}) \xrightarrow{0} \mathsf{H}^{0} (M^{\bullet}) \xrightarrow{0} \mathsf{H}^{1} (M^{\bullet}) \xrightarrow{0} \mathsf{H}^{2} (M^{\bullet}) \xrightarrow{0} \ldots\xrightarrow{0} \mathsf{H}^{j} (M^{\bullet}) \xrightarrow{0}  \cdots
\end{equation*}
(see, for example, \cite[2.5]{Keller2010DerivedTilting}). 
Hence $\mathcal O$ is a generator in $\mathbf{D}^b (\mathsf A)$. 

(2) There is a long exact sequence
\begin{equation*}
 \cdots \rightarrow \mathsf{H}^ n ( A^{\bullet}) \rightarrow \mathsf{H}^ n (B^{\bullet}) \rightarrow \mathsf{H}^ n(C^{\bullet}) \xrightarrow{\delta} \mathsf{H}^ {n+1} ( A^{\bullet}) \rightarrow \mathsf{H}^ {n+1} ( B^{\bullet}) \rightarrow \mathsf{H}^ {n+1} ( C^{\bullet}) \rightarrow \cdots
\end{equation*}
in $\mathsf A$, hence $\mathsf{H}^n(B^{\bullet})$ is a direct summand of $\mathsf{H}^n(A^{\bullet})\oplus \mathsf{H}^n(C^{\bullet})$ for each $n$. 

(3) Since
\begin{equation*}
    \mathsf{H}^{j}(\mathcal{O}[t]) =\left\{
\begin{aligned}
0 \ \mbox{ for } j \ne -t,\\
\mathcal{O} \  \mbox{ for } j = -t,
\end{aligned}
\right.
\end{equation*}
we have $\mathsf{H}^n(E^{\bullet}_p)$ is a direct summand of $\mathcal O^{\oplus s}$, where $s$ is the number of $\mathcal O[-n]$ by (2). 
$\square$
\end{proof}

For a generator $\mathcal{O}$ and an object $X$ of a semi-simple abelian category $\mathsf{A}$, let us denote by 
$
\rk_{\mathcal{O}}(X) 
$ the minimal number of copies of $\mathcal{O}$ that cover $X$, that is, the minimal $s$ such that there is an epimorphism $\mathcal O^{\oplus s} \to X \to 0$ (equivalently, $X$ is a direct summand of $\mathcal O^{\oplus s}$).  

\begin{lemma}
\label{lemma: key lemma for derived}
\label{lemma: rk=delta}
Let  $\mathcal{O}$ be a generator and  $X$ be an object of a semi-simple abelian category $\mathsf{A}$.
Then 
\begin{equation*}
    \delta_t(\mathcal{O}, X) = \rk_{\mathcal{O}}(X),
\end{equation*}
where we identify the object $X$ of  $\mathsf{A}$
with the object $X^{\bullet}$ of $\mathbf{D}^b(\mathsf{A})$.
\end{lemma}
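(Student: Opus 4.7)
The plan is to verify the claimed equality by establishing two inequalities, uniformly in $t \in \mathbb{R}$: namely $\delta_t(\mathcal{O}, X) \le \rk_{\mathcal{O}}(X)$ and $\delta_t(\mathcal{O}, X) \ge \rk_{\mathcal{O}}(X)$. The key point I intend to exploit is the semi-simplicity of $\mathsf{A}$, which by Lemma~\ref{lemma: direct summand} makes every distinguished triangle in $\mathbf{D}^b(\mathsf{A})$ split and makes cohomology interact additively with iterated extensions built from shifts of $\mathcal{O}$.

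For the upper bound, I would set $s = \rk_{\mathcal{O}}(X)$ and fix a splitting $\mathcal{O}^{\oplus s} \cong X \oplus X'$ in $\mathsf{A}$. Then I would construct the tower $E_i = \mathcal{O}^{\oplus i}$ for $0 \le i \le s$, connected by the split distinguished triangles $E_{i-1} \to E_i \to \mathcal{O} \to E_{i-1}[1]$ (all connecting morphisms zero). All shifts are $n_i = 0$, so the associated exponential sum equals $\sum_{i=1}^{s} e^{0 \cdot t} = s$, and $E_s \cong X \oplus X'$ has the form required by the definition of complexity. This already gives $\delta_t(\mathcal{O}, X) \le s$ for every $t$.

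For the lower bound, I would fix any admissible tower ending at $E_p \cong X \oplus E'$ with cones $\mathcal{O}[n_1], \ldots, \mathcal{O}[n_p]$, and set $s_0 = |\{ i : n_i = 0 \}|$. Lemma~\ref{lemma: direct summand}(3) applied at $n = 0$ tells me that $\mathsf{H}^0(E_p)$ is a direct summand of $\mathcal{O}^{\oplus s_0}$. Since $X$, viewed as a complex concentrated in degree zero, satisfies $\mathsf{H}^0(X) = X$ and is a direct summand of $E_p$, it is also a direct summand of $\mathsf{H}^0(E_p)$, and hence of $\mathcal{O}^{\oplus s_0}$. Therefore $\rk_{\mathcal{O}}(X) \le s_0$. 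Using the strict positivity of every $e^{n_i t}$, I then conclude
\begin{equation*}
\sum_{i=1}^{p} e^{n_i t} \;\ge\; \sum_{i : n_i = 0} e^{n_i t} \;=\; s_0 \;\ge\; \rk_{\mathcal{O}}(X)
\end{equation*}
for every $t \in \mathbb{R}$, and taking the infimum over all such towers yields the desired inequality.

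The only real obstacle I anticipate is arranging the lower bound to be uniform in $t$: a priori, a tower involving negative shifts $n_i$ could produce small exponential sums for $t > 0$, or vice versa, so one might worry that a clever choice of shifts brings the sum below $\rk_{\mathcal{O}}(X)$ for some $t$. The resolution is that only the subfamily of cones with $n_i = 0$ is needed to witness $\rk_{\mathcal{O}}(X) \le s_0$, and the remaining positive summands $e^{n_i t}$ can only increase the total; this is precisely what ensures the bound survives for every $t$ simultaneously, so $\delta_t(\mathcal{O}, X)$ is the constant function $\rk_{\mathcal{O}}(X)$.
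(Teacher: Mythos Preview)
Your proposal is correct and follows essentially the same approach as the paper: the upper bound via the split tower $\mathcal{O}^{\oplus i}$ with all shifts zero, and the lower bound via Lemma~\ref{lemma: direct summand}(3) counting the number of cones $\mathcal{O}[0]$ and bounding the exponential sum below by that count. Your write-up is in fact slightly more careful than the paper's, making explicit the step $X \hookrightarrow \mathsf{H}^0(E_p)$ and the positivity argument ensuring the bound is uniform in $t$.
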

\begin{proof}
Suppose that there is an object $X'\in \mathbf{D}^b(\mathsf{A})$ and a tower of distinguished triangles
\begin{eqnarray*}
\xymatrix@=.4cm{
  X_0 \ar[rr]^{} & &  X_1 \ar[rr]^{} \ar[ld]^{} & &X_2 \ar[r]^{} \ar[ld]^{}&\cdots \ar[r]^{}& X_{p-1}\ar[rr]^{} & &X_p\cong X\oplus X'  \ar[ld]^{}  \\
  &  \mathcal{O}[n_1] \ar@{-->}[lu]_{} & & \mathcal{O}[n_2]  \ar@{-->}[lu]_{}& & \cdots  & & \mathcal{O}[n_p] \ar@{-->}[lu]_{}
   }
    \end{eqnarray*} 
with $X_0 = 0$, $p\geq 0$, and $n_i\in \mathbb{Z}$.
Then by Lemma \ref{lemma: direct summand} (3), 
$X$ is a direct summand of $\mathcal O^{\oplus s}$, where $s$ is the number of $\mathcal O[0]$, hence
the complexity  $\delta_t(\mathcal{O}, \mathcal E)\geq s \geq \rk_{\mathcal{O}}(X)$.

Now if $\rk_{\mathcal{O}}(X)=e$, then there is an epimorphism $\mathcal O^{\oplus e} \to X \to 0$, i.e., $X$ is a direct summand of $\mathcal O^{\oplus e}$.  
Hence we have a tower of distinguished triangles
\begin{eqnarray*}
\xymatrix@=.4cm{
  0 \ar[rr]^{} & &  \mathcal{O} \ar[rr]^{} \ar[ld]^{} & &\mathcal{O}^{\oplus 2} \ar[r]^{} \ar[ld]^{}&\cdots \ar[r]^{}& \mathcal{O}^{\oplus e-1}\ar[rr]^{} & &\mathcal{O}^{\oplus e}\cong X\oplus X'  \ar[ld]^{}  \\
  &  \mathcal{O}[0] \ar@{-->}[lu]_{} & & \mathcal{O}[0]  \ar@{-->}[lu]_{}& & \cdots  & & \mathcal{O}[0] \ar@{-->}[lu]_{}
   }
    \end{eqnarray*} 
Hence
\begin{equation*}
    \delta_t(\mathcal{O}, X) \leq e = \rk_{\mathcal{O}}(X),
\end{equation*}
and the lemma is proved.
$\square$
\end{proof}

\begin{theorem} 
\label{Main theorem-2}
\label{th: H_t = log h_alg}
In the notation above, 
\begin{equation*}
  \mathsf{h}_{t}(\mathbf{D}^b (\mathsf{qgr}(kQ_A)), \mathsf S) = 
  \log  \mathsf{h}_{alg}(kQ_A).
\end{equation*}
\end{theorem}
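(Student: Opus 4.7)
The plan is to apply Lemma~\ref{lemma: rk=delta} to reduce the computation of $\mathsf{h}_t(\mathbf{D}^b(\mathsf{qgr}(kQ_A)), \mathsf S)$ to the growth of the ranks $\rk_\mathcal{O}(\mathsf S^n\mathcal{O})$, and then to match this growth with $\mathsf{h}_{alg}(kQ_A)$ via an asymptotic comparison of Hilbert functions.

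First I would verify that the hypotheses of Lemma~\ref{lemma: rk=delta} are satisfied. By Proposition~\ref{Proposition: semi-simple}(2,3) the category $\mathsf{qgr}(kQ_A)$ is semi-simple and $\mathcal{O}$ is a generator, so the lemma gives
$$\delta_t(\mathcal{O}, \mathsf S^n\mathcal{O}) = \rk_\mathcal{O}(\mathsf S^n\mathcal{O}) = \rk_\mathcal{O}(\mathcal{O}(n))$$
for every $n \geq 0$ and every $t \in \mathbb{R}$; in particular, the complexity is independent of $t$. Feeding this into the definition of categorical entropy yields
$$\mathsf{h}_t(\mathbf{D}^b(\mathsf{qgr}(kQ_A)), \mathsf S) = \lim_{n \to \infty} \frac{1}{n} \log_2 \rk_\mathcal{O}(\mathcal{O}(n)),$$
so it suffices to establish $\rk_\mathcal{O}(\mathcal{O}(n))^{1/n} \to \mathsf{h}_{alg}(kQ_A)$.

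Next I would invoke the concrete description of $\mathsf{qgr}(kQ_A)$ from~\cite{Smith2012CategoryQuivers}: this category has only finitely many isomorphism classes of simples, say $S_1,\dots,S_r$. Writing $\mathcal{O} = \bigoplus_{i=1}^r S_i^{a_i}$ (each $a_i \geq 1$ since $\mathcal{O}$ is a generator) and $\mathcal{O}(n) = \bigoplus_{i=1}^r S_i^{b_i(n)}$ in the semi-simple category $\mathsf{qgr}(kQ_A)$, one obtains $\rk_\mathcal{O}(\mathcal{O}(n)) = \max_i \lceil b_i(n)/a_i \rceil$, which is of the same order as $\max_i b_i(n)$ up to a bounded multiplicative constant. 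To translate these multiplicities into concrete dimensions, I would fix a degree $d_0$ large enough that $\phi_i := \dim_k (S_i)_{d_0} > 0$ for every $i$, which is possible because each simple of $\mathsf{qgr}(kQ_A)$ is represented by a graded module with nonzero support in all sufficiently large degrees. Then the Hilbert function equation
$$\dim_k (kQ_A)_{n+d_0} = \sum_{i=1}^r b_i(n)\,\phi_i,$$
valid for $d_0$ beyond the torsion part of any chosen graded lift, sandwiches $\max_i b_i(n)$ between $\dim_k (kQ_A)_{n+d_0}/(r\max_i\phi_i)$ and $\dim_k (kQ_A)_{n+d_0}/\min_i\phi_i$. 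Taking $n$-th roots collapses these bounded factors and yields $\lim \rk_\mathcal{O}(\mathcal{O}(n))^{1/n} = \lim \dim_k(kQ_A)_n^{1/n} = \mathsf{h}_{alg}(kQ_A)$.

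The main obstacle is this last step: one must justify both the finiteness of the set of simples in $\mathsf{qgr}(kQ_A)$ and the existence of a uniform degree $d_0$ at which all chosen simple lifts are simultaneously nonzero. Both are consequences of Smith's analysis of $\mathsf{qgr}(kQ)$ for a finite quiver~\cite{Smith2012CategoryQuivers}, but they must be cited precisely. Once these structural facts are in hand, the remainder is a routine squeeze argument on Hilbert dimensions, and the theorem follows cleanly from Lemma~\ref{lemma: rk=delta}.
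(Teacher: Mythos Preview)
Your reduction to $\rk_\mathcal{O}(\mathcal{O}(n))$ via Lemma~\ref{lemma: rk=delta} is exactly the paper's opening move. The divergence comes in bounding this rank against the Hilbert function.

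The paper works with the vertex projectives $\mathcal{P}_j=\pi(P_j)$ rather than the simples $S_i$. For the upper bound it uses the honest isomorphism $(kQ_A)_{\ge m}[m]\cong\bigoplus_j P_j^{k_j}$ in $\mathsf{gr}(kQ_A)$ (not merely in $\mathsf{qgr}$): if every $k_j\le s-1$ then $\mathcal{O}(m)$ would sit inside $\mathcal{O}^{s-1}$, so some $k_j\ge s$ and hence $s\le\sum_j k_j=b_m:=\dim(kQ_A)_m$. For the lower bound it observes that $\mathcal{O}(m)$ a summand of $\mathcal{O}^s$ forces $b_{m+n}\le s\,b_n$ for $n\gg 0$, whence $s\ge\mathsf{h}_{alg}(kQ_A)^m$ by the d'Alembert ratio. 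No appeal to the classification of simples in $\mathsf{qgr}(kQ_A)$ is needed.

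Your route through the simples is viable, but the point you flag as an obstacle is slightly mis-identified. The finiteness of simples and their eventual nonvanishing are indeed in Smith, but the real issue is the \emph{uniformity} of $d_0$ in $n$: a generic $\mathsf{qgr}$-isomorphism $\mathcal{O}(n)\cong\bigoplus_i S_i^{b_i(n)}$ is only realized after a truncation whose threshold may grow with $n$, and then your Hilbert identity at the fixed degree $d_0$ is not justified. The clean fix is to factor through the $P_j$'s first: the decomposition $(kQ_A)_{\ge n}[n]\cong\bigoplus_j P_j^{k_j(n)}$ holds in $\mathsf{gr}$ for all $n$ with no truncation, and then the finitely many $\mathsf{qgr}$-isomorphisms $\mathcal{P}_j\cong\bigoplus_i S_i^{c_{ij}}$ determine a single $d_0$ good for every $n$. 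Once you insert that step your squeeze argument goes through; the paper simply stops at the $\mathcal{P}_j$ level and thereby avoids both the simple decomposition and the extra citations.
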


\begin{proof}
Denote $B = kQ_A$.
Let $U$ be the set of all vertices $u$ of $Q=Q_A$ that have no outgoing infinite paths, that is, 
$\dim u B <\infty$, and let $Q'$ be the full subgraph of $Q_A$ spanned by $Q_0\setminus U$. 
It follows from~\cite[Prop.~4.2]{Smith2012CategoryQuivers} that $ \mathsf{qgr}(kQ_A))\simeq \mathsf{qgr}(kQ'))$;
the equivalence, being induced by a homogeneous algebra map, is compatible with the degree shift. Then
$\mathsf{h}_{t}(\mathbf{D}^b (\mathsf{qgr}(kQ_A)), \mathsf S) = \mathsf{h}_{t}(\mathbf{D}^b (\mathsf{qgr}(kQ')), \mathsf S)$.
On the other hand, the characteristic polynomials $\chi (\lambda) $ and $\chi' (\lambda)$ of the adjacency matrices
$A_Q$ and  $A_{Q'}$ are connected by the equality $\chi (\lambda) = \chi' (\lambda) \cdot (-\lambda)^{|U|}$, so that
$$
\mathsf{h}_{alg}(kQ_A ) = \rho (A) = \rho (A') = \mathsf{h}_{alg}(kQ').  
$$
Thus, it is sufficient to prove the theorem in the case of quiver $Q = Q'$ without sinks. Then the module 
$B_B$ and all projective $B$-modules  are torsion free. 


 Let $m\ge 0$ and let $s = \delta_t(\mathcal{O}, \mathcal O(m))$.
By Lemma~\ref{lemma: rk=delta}, we have $s = \rk_{\mathcal{O}} \mathcal O(m)$.
Then $\mathcal O(m)$ is a direct summand in $\mathcal O^{\oplus s}$ in $\mathsf{qgr}B$,
that is,
$\mathcal O^{\oplus s} \simeq
\mathcal O(m) \oplus \pi(M)$ for some 
torsion free module $M$.
It follows that for $n >>0$, we get the isomorphism of truncated modules in 
$\mathsf{Gr}B$
$$
  B_{\ge n}^{\oplus s} \simeq B_{\ge n+m}[m]   \oplus M_{\ge n}. 
$$
If $b_t$ denotes $\dim B_t$, we have 
$
    b_{m+n} \le s b_n,
$
so that $s \ge b_{m+n} /b_n $. 
It follows that there exists $q>0$ such that
$s \ge b_{m(s+1)+q} /b_{ms+q}
$
for all $s\ge 0$. Then for each $N>0$ we have $s^N \ge b_{mN+q}/b_q$. Thus, 
\begin{equation*}
s \ge 
\lim_{N\to \infty} \left( \frac{b_{mN+q}}{b_q} \right)^{1/N} = 
\lim_{N\to \infty} \left( {b_{mN+q}}^{1/(mN+q)} \right)^{(mN+q)/N} = 
\lim_{n\to \infty} \left( b_n \right)^{m/n} = 
\mathsf{h}_{alg} (B)^m.
\end{equation*}

Therefore, 
\begin{equation*}
 \mathsf{h}_{t}(\mathbf{D}^b (\mathsf{qgr}(kQ_A)), \mathsf S) =
 \limsup_{m\rightarrow +\infty} \frac{1}{m} 
 \log
 \delta_t(\mathcal{O}, \mathcal{O}(m)) 
 \ge 
  \lim_{m\rightarrow +\infty} \frac{1}{m} \log \left( \mathsf{h}_{alg} (B)^m \right) =  \log  \mathsf{h}_{alg}(B).
\end{equation*}

To prove the opposite inequality, note that the condition $s = \rk_{\mathcal{O}} \mathcal O(m)$ implies that  there is no decomposition 
$$
\mathcal O(m) \simeq \bigoplus_{j} \mathcal{P}_{j}^{\oplus k_j}   
$$
with $k_j \le s-1$ for all $j$. Hence in the decomposition
$$
B_{\ge m}[m] \simeq \bigoplus_{j} P_{j}^{\oplus k_j} 
$$
of the projective $B$-module $B_{\ge m}[m]$ into the direct sum of the simple projectives 
we have $k_j \ge s$ for some $j$.
It follows that $b_m \ge s$. Thus, 
$$
 \mathsf{h}_{t}(\mathbf{D}^b (\mathsf{qgr}(kQ_A)), \mathsf S) =
 \limsup_{m\rightarrow +\infty} \frac{1}{m} \log \delta_t(\mathcal{O}, \mathcal{O}(m)) 
 \le  \limsup_{m\rightarrow +\infty} \frac{1}{m} \log b_m 
 = \log  \mathsf{h}_{alg}(B).
$$
\end{proof}

\section{An example}
\label{section: An example}

Let $A$ be the monomial algebra 
\begin{equation*}
A = \frac{k\langle x, y, z \rangle}{(F)},
\end{equation*}
where $F=\{xz,yz\}$. We have $\mathsf{dim}_k(A_s)= 2^{s+1}-1$, therefore
\begin{equation*}
\mathsf{h}_{alg}(A) = \overline{\mathrm{lim}}_{s\rightarrow +\infty} \sqrt[s]{\mathsf{dim}_k(A_s)}= 2,
\end{equation*}
and
\begin{equation*}
  \mathsf{h}_{top}(X_F)=  \overline{\mathrm{lim}}_{s\rightarrow +\infty} \frac{1}{s} \log|\mathsf{dim}_k(A_s)|=1.
\end{equation*}

The sets of legal words of length $1$ and $2$ are
\begin{equation*}
Q_0 = \{  x, y, z  \} 
\end{equation*}
and
\begin{equation*}
Q_1 = \{ x^2, y^2, z^2,  xy , zx, zy, yx\}. 
\end{equation*}
Hence, the Ufnarovski graph $Q_A$ is given by
\begin{eqnarray*}
 \xymatrix{
  x \ar@(l,u)[]^{x} \ar[r]_{y} &  y \ar@< 2pt>[l]_{x}  \ar@(r,u)[]_{y}    \\
  z  \ar@(l,d)[]^{z} \ar[u]_{z} \ar[ru]_{z} & 
   }\\
    \end{eqnarray*}
Let $N_s^x , N_s^y ,N_s^z $ be the numbers of paths of length $n$ that start with $x,y,z$, we have
\begin{equation*}
N_s^x = 2^s, N_s^y = 2^s, N_s^z = 2^{s+1}-1,
\end{equation*}
and
\begin{equation*}
\mathsf{dim}_k(kQ_A)_{s} = N_s^x + N_s^y + N_s^z =  2^{s+2}-1. 
\end{equation*}
Therefore
\begin{equation*}
\mathsf{h}_{alg}(kQ_A) = \overline{\mathrm{lim}}_{s\rightarrow +\infty} \sqrt[s]{\mathsf{dim}_k(kQ_A)_{s}}= 2.
\end{equation*}

Let $\mathsf C =\mathbf{D}^b(\mathsf{qgr}(kQ_A))$,
and
let $\mathsf S : \mathsf C \rightarrow \mathsf C$ be
the Serre twist functor.
We write $e_x,e_y,e_z$ for the trivial paths at vertex $x,y,z$. 
The indecomposable projective left $kQ_A$-modules are
$P_x = (kQ_A)_{e_x}$,$P_y = (kQ_A)_{e_y}$,$P_z = (kQ_A)_{e_z}$.
We write $\mathcal{P}_x=\pi(P_x),\mathcal{P}_y=\pi(P_y),\mathcal{P}_z=\pi(P_z)$ for the images of the indecomposable
projectives in $QGr(kQ_A)$.
Then
\begin{equation*}
\mathcal{P}_x (1) \cong
\mathcal{P}_x \oplus \mathcal{P}_y \oplus \mathcal{P}_z; \ \mathcal{P}_y (1) \cong
\mathcal{P}_x \oplus \mathcal{P}_y \oplus \mathcal{P}_z; \ \mathcal{P}_z (1) \cong \mathcal{P}_z.
\end{equation*}
We can use induction and get 
\begin{equation*}
\mathcal{P}_x (s) \cong
\mathcal{P}_x^{\oplus 2^{s-1}} \oplus \mathcal{P}_y^{\oplus 2^{s-1}} \oplus \mathcal{P}_z^{\oplus 2^s-1}. 
\end{equation*} 
Then
$$
\mathcal{O}(s) \cong   (\mathcal{P}_x\oplus \mathcal P_y \oplus \mathcal P_z )(s-1) \cong \mathcal{P}_x(1) (s-1)
= \mathcal{P}_x (s)
\cong
\mathcal{P}_x^{\oplus 2^{s-1}} \oplus \mathcal{P}_y^{\oplus 2^{s-1}} \oplus \mathcal{P}_z^{\oplus 2^s-1}.
$$

Then the object $\mathcal{O}^{2^{s-1}}$ is a direct summand of $\mathcal{O}(s) = \mathsf S^s\mathcal{O} $;
in turn, the object $\mathcal{O}(s) $ is a direct summand in $\mathcal{O}^{2^s-1}$.
It follows that $2^{s-1} \le \rk_\mathcal{O} \mathsf {S}^s\mathcal{O} \le 2^s-1$. By
Lemma~\ref{lemma: rk=delta}, we have
\begin{equation*}
\mathsf{h}_t(\mathsf S)= 
\lim_{n\rightarrow \infty} \frac{1}{n} \log  \delta_t(\mathcal{O}, \mathsf S^n\mathcal{O})= \lim_{n\rightarrow \infty} \frac{1}{n} \log \rk_\mathcal{O} \mathsf S^n\mathcal{O}= 
\log 2.
\end{equation*}



\ack The authors are grateful to anonymous reviewers for their comments and suggestions, which helped us to improve the text and avoid a number of inaccuracies. The work of D. Piontkovski has been supported by the grant of the Russian Science Foundation, RSF 22-21-00912. 




\end{document}